\definecolor{orange}{rgb}{1,0.5,0}
\DeclareMathAlphabet{\mathpzc}{OT1}{pzc}{L}{it} 
\newtheorem{definition}{Definition}[section]
\newtheorem{proposition}[definition]{Proposition}
\newtheorem{theorem}{Theorem}
\newtheorem{remark}[definition]{Remark}
\newtheorem{lemma}[definition]{Lemma}
\def\C{\mathbb{C}}
\def\geq{\geqslant}
\def\leq{\leqslant}
\def\R{\mathbb{R}}
\def\Z{\mathbb{Z}}
\def\N{\mathbb{N}}
\def\la{\langle}
\def\ra{\rangle}
\def\cB{\mathcal{B}}
\newcommand{\mf}{\mathfrak}
\newcommand{\bea}{\begin{eqnarray}}
  \newcommand{\eea}{\end{eqnarray}}
  \newcommand{\beab}{\begin{eqnarray*}}
  \newcommand{\eeab}{\end{eqnarray*}}
  \newcommand{\be}{\begin{equation}}
  \newcommand{\ee}{\end{equation}}
\newcommand*{\diff}{\mathop{}\!\mathrm{d}}
\newcommand{\norm}[1]{\left\lVert#1\right\rVert}
\newcommand{\SL}{%
\operatorname{SL}
}
\newcommand{\Spec}{%
\operatorname{Spec}
}
\title{Polynomial 3-mixing for smooth time-changes of horocycle flows}
\author{Adam Kanigowski and Davide Ravotti}
\begin{document}
\maketitle
\begin{abstract} Let $(h_t)_{t\in \R}$ be the horocycle flow acting on $(M,\mu)=(\Gamma \backslash \SL(2,\R),\mu)$, where $\Gamma$ is a co-compact lattice in $\SL(2,\R)$ and $\mu$ is the homogeneous probability measure locally given by the Haar measure on $\SL(2,\R)$. Let $\tau\in W^6(M)$ be a strictly positive function and let $\mu^{\tau}$ be the measure equivalent to $\mu$ with density $\tau$. We consider the time changed flow $(h_t^\tau)_{t\in \R}$ and we show that there exists $\gamma=\gamma(M,\tau)>0$ and a constant $C>0$ such that for any $ f_0, f_1, f_2\in W^6(M)$ and for all $0=t_0<t_1<t_2$, we have
$$
\left|\int_M \prod_{i=0}^{2} f_i\circ h^\tau_{t_i} \diff \mu^\tau -\prod_{i=0}^{2}\int_M f_i \diff \mu^\tau \right|\leq C \left(\prod_{i=0}^{2} \|f_i\|_6\right) \left(\min_{0\leq i<j\leq 2} |t_i-t_j|\right)^{-\gamma}.
$$
With the same techniques, we establish polynomial mixing of all orders under the additional assumption of $\tau$ being fully supported on the discrete series.
\end{abstract}

\section{Introduction}

\subsection{Unipotent flows and their time-changes}

Unipotent flows on compact (or, in general, finite volume) quotients of Lie groups are homogeneous flows given by the action of one-parameter unipotent subgroups.
An important example of a unipotent flow is the \emph{horocycle flow} on compact quotients $\Gamma \backslash \SL(2,\R)$ of $\SL(2,\R)$, defined by multiplication on the right by $\begin{pmatrix}1&t\\0&1\end{pmatrix}$.
Identifying $\Gamma \backslash \SL(2,\R)$ with the unit tangent bundle of the compact hyperbolic surface $S=\Gamma \backslash \mathbb{H}$, the horocycle flow is the unit speed parametrization of translations along the stable leaves of the geodesic flow on $T^1S$. 

Dynamical properties of horocycle flows have been studied in great details and are now well-understood: they have zero entropy \cite{Gur}, in the compact setting are minimal \cite{Hed}, uniquely ergodic \cite{Fur}, mixing and mixing of all orders \cite{Mar}, and have countable Lebesgue spectrum \cite{Par} (mixing and spectral properties hold for general finite volume quotients). Finer ergodic properties were investigated by Ratner \cite{Rat2, Rat3}.

Another important class of unipotent flows is given by \emph{nilflows on nilmanifolds}, namely homogeneous flows on compact quotients of (non-abelian) nilpotent Lie groups. The prototypical examples of nilflows are \emph{Heisenberg nilflows} on quotients of the 3-dimensional Heisenberg group.

One key feature of unipotent flows, in particular of the horocycle flow, is a form of \emph{slow divergence}: the distance between nearby points lying on different orbits grows at most polynomially in time (quadratically, in the case of horocycle flows). This property is in sharp contrast with the dynamics of \emph{hyperbolic} flows, such as the geodesic flow, for which the divergence of orbits is exponential. 
Unipotent flows are hence examples of smooth \emph{parabolic} flows, namely smooth flows for which nearby points diverge polynomially in time. 

Outside the homogeneous setting, very little is known for general smooth parabolic flows, even for smooth perturbations of homogeneous ones.
Perhaps the simplest case of such perturbations are smooth \emph{time-changes}, or \emph{time-reparametrizations}. Roughly speaking, a smooth time-change of a flow is obtained by moving along the same orbits, but varying smoothly the speed of the points. 
In other words, a smooth time-change is defined by rescaling the generating vector field by a smooth function $\tau$, called the \emph{generator} of the time-change, see Section \ref{sec:tch} for definitions.
A time-change is said to be \emph{trivial} if its generator is a \emph{quasi-coboundary} for the flow, see Section \ref{sec:tch}. It is easy to see that trivial time-changes are isomorphic to the original flow.

On the other hand, performing a non-trivial smooth time-change can alter significantly the ergodic properties of the flow. 
This is the case, for example, of ergodic nilflows. 
Indeed, nilflows are never weakly mixing, because of the presence of a toral factor, corresponding to the projection onto the abelianization of the nilpotent group. Nevertheless, non trivial time-changes, within a natural class of \lq\lq polynomial\rq\rq\ functions on the nilmanifold,  destroy the toral factor and are strongly mixing, as was shown by Avila, Forni, Ulcigrai, and the second author in \cite{AFRU}, extending previous results in \cite{AFU} and in \cite{Rav}. For time-changes of bounded type Heisenberg nilflows, one obtains an even stronger dichotomy, \cite{FK}: either the time-change is trivial (in which case the toral factor-persists), or the time-changed flows is \emph{mildly mixing} (it has no non-trivial rigid factors).

In the case of the horocycle flow, the study of the cohomological equation by Flaminio and Forni \cite{FF} imply that a generic time-change of the horocycle flow is non-trivial and thus, by the rigidity result of Ratner \cite{Rat5}, not even measurably conjugated to the horocycle flow itself. Hence, non-trivial time-changes form an important family of smooth \emph{non-homogeneous} parabolic flows.
Similarly to the unperturbed horocycle flow, they are mixing, as was shown by Marcus \cite{Mar2}. Moreover, it was conjectured by Katok and Thouvenot, \cite{Kat-Tho}, that sufficiently smooth time changes of horocycle flows have countable Lebesgue spectrum. Lebesgue spectral type for smooth time-changes was proved by Forni and Ulcigrai \cite{ForU} (independently, Tiedra de Aldecoa \cite{TdA} obtained the absolute continuity property). The full version of the Katok-Thouvenot conjecture (countable multiplicity) was recently obtained in \cite{FFK}.

However, as it happens for nilflows, other finer properties of non trivial time-changes are different from their homogeneous counterpart. One such example is the set of joinings between their rescalings: whilst all rescalings of the horocycle flow are isomorphic to each other, the first author, Lema\'nczyk and Ulcigrai \cite{KLU}, and Flaminio and Forni \cite{FF2} independently, showed that different rescalings of non-trivial time-changes are always disjoint.

\subsection{Quantitative mixing}

Let $k \in \N$, $k \geq 2$. We recall that a measure preserving flow $\{\varphi_t \colon M \to M\}_{t \in \R}$ on a probability space $(M, \mathscr{B}, \mu)$ is said to be \emph{$k$-mixing} if for any $f_0, \dots, f_{k-1} \in L^{\infty}(M)$ we have
\begin{equation}\label{eq:k_mixing}
\left\lvert \int_M f_0 \cdot f_1 \circ \varphi_{t_1} \cdots f_{k-1}\circ \varphi_{t_{k-1}} \diff \mu - \left( \int_M f_0 \diff \mu \right) \cdots \left( \int_M f_{k-1} \diff \mu \right) \right\rvert \to 0
\end{equation}
as $|t_i - t_j| \to \infty$, for all $t_i \neq t_j$, $0 \leq i,j \leq k-1$. 
We say that $\varphi_t$ is \emph{mixing of all orders} if it is $k$-mixing for all $k \geq 2$.
In the case of the horocycle flow, it follows from \cite{Rat4} that Ratner's property persists under smooth time-changes, hence all smooth time changes of the horocycle flow are mixing of all orders.

Under some regularity assumptions on the observables $f_i$, one can ask about the \emph{rate of decay} in the limit \eqref{eq:k_mixing} in terms of the minimum $|t_i - t_j|$ for $i \neq j$. It turns out that in the parabolic setting, quantitative $2$-mixing is more tractable than quantitative higher order mixing as we describe below.

\subsubsection{Quantitative 2-mixing}
For parabolic flows (i.e. flows of intermediate orbit growth), quantitative 2-mixing is in most cases based on controlled (quantitative) stretching of certain curves by the flow.  Ratner, \cite{Rat1}
proved that the rate of 2-mixing of the horocycle flow is \emph{polynomial}, namely she showed that there exists an explicit $\gamma > 0$, depending only on the co-compact lattice $\Gamma$, such that for all $\mathscr{C}^3$ functions $f_0,f_1$ there exists a constant $C = C(f_0, f_1)$ such that for all $t \geq 1$ we have 
$$
\left\lvert \int_M f_0 \cdot f_1 \circ \varphi_{t} \diff \mu - \left( \int_M f_0 \diff \mu \right) \left( \int_M f_{1} \diff \mu \right)\right\rvert \leq C t^{-\gamma}.
$$
Moreover, it can be shown that this bound is optimal. In the case of time-changes of horocycle flows, quantitative mixing estimates were obtained by Forni and Ulcigrai in \cite{ForU}, although they are conjecturally not optimal. Their result is based on sharp bounds on ergodic integrals of the horocycle flow proved by Flaminio and Forni in \cite{FF} and refined by Bufetov and Forni for \lq\lq horocycle-like\rq\rq\ arcs in \cite{BuFo}, together with stretching of geodesic curves. 

For other parabolic flows, Forni and the first author in \cite{FK2} showed that, for a full dimensional set of Heisenberg nilflows and for a generic set of smooth time-changes, if the time-change is not trivial, the rate of mixing is polynomial. This is the only quantitative result available for mixing properties of time-changes of nilflows. 

A shearing phenomenon analogous to the one described above is at the base of several results on quantitative 2-mixing for non-homogeneous parabolic flows, see e.g., \cite{Fay}, \cite{Rav}, \cite{FFK}. We will  use a version of this mechanism in this paper as well, see the proof of Theorems \ref{thm:main3} and \ref{main:th2} in Sections \ref{sec:3mixproof} and \ref{sec:proof_of_Theorem_2}.

\subsubsection{Quantitative higher order mixing}
Quantitative higher order mixing (in particular, $3$-mixing) for parabolic flows is much harder to get and, until recently, there were no results in the literature on this problem. The main reason for this is that mechanisms for obtaining higher order mixing are, by their very nature, non-quantitative: singular spectrum criterion of Host \cite{Ho}, Ratner's property \cite{Rat3}, or Marcus multiple mixing mechanism \cite{Mar}.

The first, and to the best of our knowledge the only, quantitative higher order mixing result for parabolic systems appears in the very recent work of Bj\"orklund, Einsiedler, and Gorodnik \cite{BEG}, where the authors proved a very general quantitative result for multiple mixing of group actions which, in the very specific case of the regular action of $\SL(2,\R)$, implies that, for all $k\geq 2$, the rate of $k$-mixing of the horocycle flow is \emph{polynomial}. Such results are difficult to obtain for non-homogeneous flows, and in particular for non trivial time-changes of unipotent flows, since one cannot exploit the algebraic properties of the actions and the powerful representation theory machinery.

\subsection{Statement of the main results}

In this paper, we establish \emph{polynomial $3$-mixing} estimates for any smooth time-change of the horocycle flow, see Theorem \ref{thm:main3} below.
To the best of our knowledge, this is the first quantitative mixing result beyond 2-mixing for smooth non-homogeneous parabolic flows. 

Let  $(h_t)_{t\in \R}$ be the horocycle flow on $(M,\mu)$, where $M=\Gamma \backslash \SL(2,\R)$ is compact and $\mu$ is locally given by the Haar measure. Let $W^6(M) \subset  L^2(M)$ denote the standard Sobolev space or order $6$ (see Section \ref{sec:spec} for definitions), and let $\tau \in W^6(M)$ be a positive function. We consider the time changed flow $(h_t^\tau)_{t\in \R}$ generated by $\tau$ as defined in  Section \ref{sec:tch}. 
The following is our main result.

\begin{theorem}\label{thm:main3}
Let $\tau \in W^6(M)$ be a positive function. There exists $\gamma=\gamma(M,\tau)>0$ and a constant $C>0$ such that for any 
$ f_0, f_1, f_2 \in W^6(M)$ and for all $0=t_0<t_1<t_2$, we have 
$$
\left|\int_M f_0 \cdot (f_1\circ h^\tau_{t_1}) \cdot (f_2\circ h^\tau_{t_2})\diff \mu^\tau -\prod_{i=0}^{2}\int_M f_i \diff \mu^\tau \right|\leq C \left(\prod_{i=0}^{2} \|f_i\|_6\right) \left(\min_{0\leq i<j\leq 2} |t_i-t_j|\right)^{-\gamma}.
$$
\end{theorem}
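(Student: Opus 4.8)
The strategy is to reduce the three-point correlation to a two-point correlation along a rescaled copy of the flow, exploiting the parabolic shearing mechanism referred to in the introduction. Write $m=\min_{0\le i<j\le 2}|t_i-t_j|$; after relabelling we may assume $m$ is either $t_1$ or $t_2-t_1$. The first step is to pass from the time-changed flow $(h^\tau_t)$ and the measure $\mu^\tau$ back to the homogeneous picture: using the standard correspondence between a time-change and a cross-section/suspension, or more directly by differentiating the cocycle relation for $\tau$, one expresses $h^\tau_t(x)=h_{\sigma(x,t)}(x)$ where $\sigma$ is the inverse of the additive cocycle $(x,s)\mapsto\int_0^s\tau(h_u x)\,du$, and $\diff\mu^\tau=\tau\,\diff\mu$. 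The Sobolev hypothesis $\tau\in W^6(M)$ guarantees enough regularity of $\sigma$ and of $\tau$ to keep all subsequent integrations by parts legitimate and to control the resulting constants in terms of $\|\tau\|_6$; the $W^6$ norms of the $f_i$ then enter because composing with $h^\tau_{t_i}$ and differentiating in the flow direction produces derivatives of the $f_i$ of bounded order.

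The core analytic step is the shearing estimate. Localise $f_2\circ h^\tau_{t_2}$ along short pieces of $h^\tau$-orbits of length comparable to a small power of $m$, using a smooth partition of unity transverse to the flow built from the $\SL(2,\R)$ structure. On each such piece, the geodesic–horocycle commutation relation $g_{-s}h_t g_s=h_{te^{2s}}$ shows that pushing a transversal arc of the geodesic forward by $h^\tau$ for time of order $m$ stretches it (in the unperturbed horocycle time) by a factor polynomial in $m$; since $\tau$ is bounded above and below, the same stretching, up to constants depending on $\tau$, holds in the time-changed coordinates. Consequently, integrating $f_2\circ h^\tau_{t_2}$ against a fixed smooth arc equidistributes at a polynomial rate: this is exactly where one invokes the quantitative equidistribution of horocycle (and horocycle-like) arcs of Flaminio–Forni and Bufetov–Forni that the introduction cites, transported through the time-change. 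This reduces the triple integral, up to an error $O(m^{-\gamma_1}\prod\|f_i\|_6)$, to $\big(\int f_2\,\diff\mu^\tau\big)$ times the two-point correlation $\int_M f_0\cdot(g\circ h^\tau_{t_1})\,\diff\mu^\tau$, where $g$ is a smoothed version of $f_1$ with $\|g\|_6\lesssim\|f_1\|_6$.

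Finally, apply the known polynomial 2-mixing estimate for smooth time-changes of the horocycle flow (Forni–Ulcigrai, as recalled in the introduction) to bound $\big|\int_M f_0\cdot(g\circ h^\tau_{t_1})\,\diff\mu^\tau-\int f_0\,\diff\mu^\tau\int g\,\diff\mu^\tau\big|\le C\|f_0\|_6\|g\|_6\,t_1^{-\gamma_2}$, and note $\int g\,\diff\mu^\tau$ is close to $\int f_1\,\diff\mu^\tau$ up to $O(m^{-\gamma_1})$. Collecting the three errors and choosing $\gamma=\tfrac12\min(\gamma_1,\gamma_2)$ (the factor accounts for the optimisation of the localisation scale against $m$) yields the claimed bound. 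The main obstacle is the middle step: making the shearing/equidistribution argument uniform when the two gaps $t_1$ and $t_2-t_1$ are of genuinely different sizes — one must choose which variable plays the role of the "stretched" direction and verify that the partition-of-unity errors, the non-constancy of $\tau$ along the stretched arcs, and the loss coming from differentiating the cocycle $\sigma$ all remain polynomially small in $m$ rather than in the individual $t_i$; this is precisely the point where the $W^6$ regularity is used up and where the value of $\gamma$ gets degraded from the 2-mixing exponent.
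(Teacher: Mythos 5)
Your overall frame (shearing of transverse arcs, equidistribution along stretched arcs, reduction to known 2-mixing) matches the paper's Case A, but the proposal has a genuine gap at exactly the point you flag as "the main obstacle": the regime where the two gaps $t_1$ and $t_2-t_1$ are of comparable size. Your reduction replaces $f_2\circ h^\tau_{t_2}$ by its mean, up to an error $O(m^{-\gamma_1})$, by equidistributing $f_2$ along short arcs on which $f_0$ and $f_1\circ h^\tau_{t_1}$ are treated as essentially constant. That requires a localisation scale $\sigma$ which is simultaneously (i) large enough that the arc, stretched by the flow for time of order $t_2$ (or $t_2-t_1$), equidistributes polynomially, and (ii) small enough that the same arc is \emph{not} appreciably sheared at time $t_1$. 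When $t_1\asymp t_2$ no such $\sigma$ exists: any arc long enough for (i) is already stretched at time $t_1$, so $f_1\circ h^\tau_{t_1}$ oscillates along it and the decoupling of $f_2$ fails; the error is then not polynomially small in $m$. The paper handles precisely this case (its Case B, $t_1>t_2^{1-\beta/2}$) by a different mechanism that your proposal does not contain or replace: a Cauchy--Schwarz reduction to the $L^2$ norm of the two-parameter ergodic average $\frac1{N}\int (f_1\circ h^\tau_{Ku})(f_2\circ h^\tau_u)\,du$, estimated via a quantitative van der Corput inequality whose inner correlations are controlled by 2-mixing applied to the products $f_i\cdot f_i\circ h^\tau_{K_i l}$ (Proposition 3.5 / Lemma 3.1 in the paper). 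Without this (or an equivalent substitute) your argument only proves the theorem when $\min(t_1,t_2-t_1)$ is much smaller than $t_2$.

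A secondary but substantive issue: you assert that because $\tau$ is bounded above and below, the homogeneous stretching transfers to the time-changed flow "up to constants". This is not enough. The renormalization $h_t\circ g_s=g_s\circ h_{e^st}$ genuinely fails for $h^\tau$, and the error in it grows with $t$; the paper must prove (Lemma 2.5, via the Flaminio--Forni deviation bounds applied to $\tau-\tau\circ g_s$) that $h^\tau_T\circ g_s=g_s\circ h^\tau_{e^sT+A}$ with $|A|\leq C s T^{1-\beta}$, and then choose $\sigma$ as a specific negative power of $t_2$ so that $\sigma t_2^{1-\beta}$ is small while $\sigma t_2$ is large. Boundedness of $\tau$ alone gives no power saving, and this quantitative control of the non-commutation error, accumulated over time $t_2$, is exactly what determines the admissible $\sigma$ and the final exponent $\gamma$; your sketch leaves this unjustified.
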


With the same techniques, we are able to prove polynomial mixing \emph{of all orders} only for time-changes supported on the \emph{discrete series} $\mathcal{H}_d$ (see Section \ref{sec:spec} for definitions). The proof however present several additional technical difficulties compared to the 3-mixing case, hence we present it in the Appendix \ref{sec:appendix}.

\begin{theorem}\label{thm:main}
Let $\tau \in W^6(M) \cap \mathcal{H}_d$ be a positive function, and let $k\in \N$. There exists $\gamma=\gamma(M,k,\tau)>0$ such that for any 
$ f_0,\ldots,f_{k-1}\in W^6(M)$ there exists $C=C(f_0,\ldots,f_{k-1})>0$, such that for all $0=t_0<t_1<\ldots<t_{k-1}$, we have 
$$
\left|\int_M \prod_{i=0}^{k-1} f_i\circ h^\tau_{t_i} \diff \mu-\prod_{i=0}^{k-1}\int_M f_i \diff \mu\right|\leq C\left(\min_{0\leq i<j\leq k-1} |t_i-t_j|\right)^{-\gamma}.
$$
\end{theorem}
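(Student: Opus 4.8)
The plan is to reduce the $k$-mixing estimate for the time-changed flow $(h^\tau_t)$ to a statement about stretching of horocycle arcs by the flow, exploiting the fact that $\tau$ is supported on the discrete series $\mathcal{H}_d$, where ergodic integrals of the horocycle flow obey especially favorable (power-saving) bounds. First I would set up the standard change of coordinates relating $(h^\tau_t)$ to the unperturbed flow $(h_t)$: an orbit segment of $h^\tau$ of length $t_i$ corresponds to an orbit segment of $h$ of length $S_i(x) = S^{(\tau)}_{t_i}(x)$, the ergodic integral of $1/\tau$ (or of $\tau$, depending on the normalization in Section \ref{sec:tch}) along the $h$-orbit; the key point is that $S_i(x)$ is comparable to $t_i$ up to lower-order fluctuations, and that its derivative along the geodesic direction is controlled. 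I would then run the shearing argument inductively on $k$: given the correlation of $k$ functions, partition the base into small geodesic arcs, push them forward by $h^\tau_{t_1}$, and use the fact that, because $t_1$ (the minimal gap, after relabeling) is large, these arcs get stretched and equidistribute; integrating out the first function against the equidistributed measure produces $\int f_0 \diff\mu$ times a correlation of the remaining $k-1$ functions along the flow, up to an error that is polynomially small in $t_1$. The assumption $\tau \in \mathcal{H}_d$ is what makes the equidistribution rate for the stretched arcs uniform and power-saving at each stage, using the bounds of Flaminio--Forni \cite{FF} and Bufetov--Forni \cite{BuFo} for horocycle-like arcs.

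More concretely, the key geometric step is the following: choosing a small parameter $\eta = \eta(t_1)$, for a typical $x$ the set $\{h^\tau_{t_1}(a_{-s}\,x) : 0 \le s \le \eta\}$ (where $a_s$ denotes the geodesic flow) is, up to a small reparametrization error, an arc of the stable horocycle whose length is $\asymp \eta\, e^{s}$-type stretched and on which a sufficiently smooth function $f_1$ is close to its spatial average; the discrete-series hypothesis on $\tau$ guarantees that the relevant twisted ergodic averages of $h_t$ decay like a negative power of the length, uniformly. Summing these local estimates and iterating the scheme for $f_2, \dots, f_{k-1}$, each step loses a factor $(\min_j |t_i - t_j|)^{-\gamma'}$ for some $\gamma' > 0$ depending on the spectral gap and on $\tau$; after $k-1$ steps one obtains the claimed bound with $\gamma = \gamma(M,k,\tau)$. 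The Sobolev regularity $W^6(M)$ on the $f_i$ and on $\tau$ is exactly what is needed to control the $C^1$-norms of the stretched functions and the second-order terms in the reparametrization along the geodesic.

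I expect the main obstacle to be the bookkeeping in the inductive step: after pushing forward by $h^\tau_{t_1}$, the remaining correlation is no longer a correlation of the \emph{same} flow against fixed observables but involves observables that have been composed with a reparametrization cocycle depending on $x$, so one must show that this perturbed correlation is still estimated by the inductive hypothesis, with controlled Sobolev norms, and that the accumulated reparametrization errors across all $k-1$ steps remain polynomially small in the minimal gap rather than merely in $t_1$. This is precisely where the ``additional technical difficulties compared to the 3-mixing case'' alluded to in the excerpt arise, and why the discrete-series restriction — which removes the principal- and complementary-series contributions whose ergodic integrals decay too slowly for the iteration to close — is imposed; handling those series would require a genuinely new input, so the proof I would give is confined to $\tau \in \mathcal{H}_d$, exactly as stated.
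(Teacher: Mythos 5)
Your starting point (shearing of short geodesic arcs, Flaminio--Forni bounds, induction losing a polynomial factor per step) is in the right spirit, but the core of your inductive step has a genuine gap. When you average the correlation over a geodesic segment $\{g_sx : 0\le s\le \eta\}$, the shearing acts on \emph{all} the time-shifted observables simultaneously: by the renormalization relation \eqref{eq:ren}, perturbed by the cocycle error of Lemma \ref{lem:erren}, the arc is stretched by a different factor $\asymp \eta\, t_i$ for each $i$. What you obtain along the arc is therefore not ``a horocycle arc on which $f_1$ is close to its spatial average'' against which one function can be integrated out; it is a \emph{multiple} ergodic average of the form $\frac1\eta\int_0^\eta \prod_i f_i\circ h^\tau_{e^s t_i}\,\diff s$, and equidistribution of horocycle arcs (Flaminio--Forni, Bufetov--Forni) says nothing directly about such products. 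Your claim that this step ``produces $\int f_0\,\diff\mu$ times a correlation of the remaining $k-1$ functions'' is unjustified, and iterating it does not close. The paper estimates exactly this object through a quantitative van der Corput inequality (Lemma \ref{lem:vdc}, Proposition \ref{lem:l2}), whose proof requires lower-order \emph{quantitative} mixing as an inductive hypothesis (the $Q(\ell)$ properties of Definition \ref{def:qm}, with the induction carried out in Theorem \ref{main:th2}), together with a combinatorial clustering procedure that treats separately the case where some of the $t_i$ are close to one another: there the nearby observables are grouped into a single $W^6$ function, with Sobolev norm growing like a power of the cluster radius, and lower-order mixing is applied. Neither ingredient appears in your outline, and without them the competing constraints on the arc length (small enough to control reparametrization errors over time $t_{k-1}$, large enough that the length times the minimal gap is large) cannot be reconciled --- this is precisely the obstruction the authors describe for orders $\ge 4$.

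In addition, the role you assign to the hypothesis $\tau\in\mathcal H_d$ is not the one it actually plays. The observables $f_i$ are arbitrary $W^6$ functions, and the power-saving equidistribution of sheared arcs holds for them with an exponent depending only on the spectral gap, irrespective of their spectral decomposition; there is no removal of principal or complementary series contributions of the $f_i$. The discrete-series restriction on $\tau$ enters only through Lemma \ref{lem:erren}: it makes the error $A(x,s,T)$ in the commutation $h^\tau_T\circ g_s=g_s\circ h^\tau_{e^sT+A(x,s,T)}$ of size $O(s\log T)$ instead of $O(sT^{1-\beta})$, which is what allows the arc length to be chosen as a small negative power of $t_{k-1}$ while keeping the accumulated reparametrization error polynomially small in the minimal gap (via the combinatorial lower bound \eqref{eq:stopk} in the well-separated case). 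A correct write-up would need to replace your peel-one-function-off iteration by an argument of this type, or supply a new mechanism for the simultaneous shearing of several observables.
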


The driving idea of the proof is refining Marcus' approach for multiple mixing of the horocycle flow in \cite{Mar} by making it quantitative. 
Our argument shares some similarities with the one in \cite{BEG}, notably in exploiting the shearing of a transverse vector field under the action (see in particular \cite[\S7.2]{BEG}). For homogeneous flows, the push-forward of left-invariant vector fields is given by the Adjoint, which can be controlled using the algebraic structure of the group, see \cite[\S2]{BEG}. In our setting, however, due to the non-homogeneous structure of the flow, we employ a more geometric approach and we exploit precise bounds on the growth of ergodic integrals and good quantitative control of the (\emph{non-uniform}) stretching of geodesic curves.
In the proofs of Theorem \ref{thm:main3} and Theorem \ref{thm:main}, the problem is reduced to study the $L^2$ norm of some multiple ergodic averages, see Propositions \ref{lem:3mixcase} (and the more general version in Proposition \ref{lem:l2}), which are estimated using a sharp quantitative version of van der Corput inequality (Lemma \ref{lem:vdc}).
We hope that the local mechanism that we use has the potential to be applied to other non-homogeneous flows, such as time-changes of higher step nilflows, or some smooth surface flows.

We should emphasize that at this moment we do not know how to generalize Theorem \ref{thm:main} to higher order correlations (for functions having non-trivial support outside the discrete series). The main reason is that in the case of $3$-mixing we face one of the two situations: either $t_1$ and $t_2$ are of similar order (in which case it is possible to apply Proposition \ref{lem:3mixcase}) or $t_1$ is much smaller than $t_2$ (in which case we use the fact that appropriate length geodesic segments are not stretched for time $t_1$, whereas they stretch for time $t_2-t_1$ and we use invariance of measure). The reader will notice that in both cases the choice of the length $\sigma$ of the geodesic segments is rather delicate. This mechanism seems not to work even for the case of $4$-mixing especially in the case if $t_1$ is much smaller than $t_3$ and of order $t_3-t_2$: on one hand, a meaningful estimate using Proposition \ref{lem:l2} would force $\sigma$ to be larger than $(t_3 - t_2)^{-1}$, on the other hand, controlling the deviations from the homogeneous case requires $\sigma$ to be smaller than some negative power of $t_3$, and hence an appropriate choice of $\sigma$ is not possible. We can handle this problem assuming additionally that the time change $\tau$ is fully supported on the disrecte series (Theorem \ref{thm:main}): in this case the deviation of ergodic averages for $\tau$ are logarithmic (see Lemma \ref{lem:erren}). We then inductively get polynomial $k+1$-mixing from polynomial $k$-mixing (using logarithmic deviation bounds for the time change).

\section{Definitions and basic properties}
\subsection{Time changes of flows}\label{sec:tch}
Let $(\varphi_t)$ be a flow on $(X,\cB,\mu)$ and let $\tau\in L^1(X,\mu)$ be a strictly positive function. Then, for a.e.~$x\in X$, for every $t\in \R$, there exists a unique solution $u=u(x,t)$ of
$$
\int_{0}^u \tau(\varphi_sx) \diff s=t.
$$
The function $u(x,\cdot)$ defined this way is an $\R$-cocycle, i.e.~for $t_1,t_2\in \R$, we have $u(x,t_1+t_2)=u(x,t_1)+u(\varphi_{t_1}x,t_2)$. We define the \emph{time-change} flow $(\varphi_t^\tau)_{t\in \R}$ induced by $\tau$ by setting $\varphi_t^\tau(x)=\varphi_{u(x,t)}(x)$, and we say that $\tau$ is its \emph{generator}. Since $u(x,\cdot)$ is a cocycle, the latter equality defines an $\R$-action. Moreover,  $(\varphi_t^\tau)_{t\in \R}$ preserves the measure $\mu^\tau$ given by $\diff \mu^\tau=\frac{\tau}{\int_X \tau \diff \mu} \diff \mu$. We will always WLOG assume that $\int_X \tau \diff \mu = 1$.

Since the flow $(\varphi_t)$ has the same orbits as any of its time-changes, and since the invariant measure $\mu^\tau$ is equivalent to $\mu$, ergodicity is preserved when performing a smooth time-change. Mixing and other spectral properties, however, are more delicate, as discussed in the introduction.

We say that a function $\tau$ is a {\em quasi-coboundary} for $(\varphi_t)_{t\in \R}$ if there exists a measurable solution $\xi \colon X\to \R$ to 
$$
\int_0^t\tau(\varphi_sx)ds-t \int_X \tau d\mu= \xi(x)-\xi(\varphi_tx), \text{ for } t\in \R.
$$
It follows that if $\tau$ is quasi-coboundary, then $(\varphi_t)_{t\in \R}$ and $(\varphi_t^\tau)_{t\in \R}$ are {\em isomorphic}. We call such time changes \emph{trivial}.

\subsection{Horocycle and geodesic flows}
Let $G:=\SL(2,\R)$ be the group of $2\times 2$ matrices with determinant $1$ and let $\mu$ be the Haar measure on $G$. We denote the lie algebra of $G$ by $\mf{g}$, which consists of $2\times 2$ matrices of zero trace. Let $U,X,V\in \mf{g}$ be given by 
$$
U:=\begin{pmatrix}0&1\\0&0\end{pmatrix},\;\;\;\; X:=\begin{pmatrix}1/2&0\\0&-1/2\end{pmatrix},\;\;\;\;V:=\begin{pmatrix}0&0\\1&0\end{pmatrix}.
$$
Then $U, X,V$ are  generators of respectively the (stable) horocycle, geodesic and opposite (unstable) horocycle flow. We will be dealing with flows generated by $U$ and $X$. More precisely, let $\exp \colon \mf{g}\to G$ be the exponential map and let $\Gamma\subset G$ be a co-compact lattice in $G$. We will consider  the following $\R$-actions on the homogeneous space $M:=\Gamma \backslash G$: the horocycle flow
\be\label{eq:hor}
h_t(\Gamma x)=\Gamma x \exp(tU), 
\ee
and the geodesic flow
\be\label{eq:geo}
g_t(\Gamma x)=\Gamma x \exp(tX).
\ee
The flows $h_t$ and $g_t$ both preserve a smooth measure on $M$, locally given by the Haar measure $\mu$, which we will denote also by $\mu$. Recall that the horocycle and geodesic flows satisfy the following {\em renormalization} equation 
\be\label{eq:ren}
h_t \circ g_s=g_s \circ h_{e^{s}t}, \text{ for every } t,s\in \R.
\ee
\subsection{Spectral theory of horocycle flows}\label{sec:spec}
We will briefly recall some facts from the spectral theory of horocycle flows, for details see e.g.~\cite{FF}. Let 
$$
\Theta:=\begin{pmatrix}0&1/2\\-1/2&0\end{pmatrix}
$$
be the generator of the maximal compact subgroup $\text{SO(2)}$ of $G=\SL(2,\R)$.
Let $\mathcal{H} = L^2(M,\mu)$ be the Hilbert space of square integrable functions on $M = \Gamma \backslash G$, on which $G$ acts unitarily. 
We define the Laplacian by setting $\Delta:=-(X^2+U^2/2+V^2/2)$; it is an \emph{elliptic} element of the universal enveloping algebra of $\mathfrak{g}$ which acts as an essentially self-adjoint operator on $\mathcal{H}$. Remark that $\Delta$ on $\text{SO(2)}$-invariant functions coincides with the Laplace-Beltrami operator on the compact hyperbolic surface $S= \Gamma \backslash \mathbb{H}$. The {\em Sobolev space} of order $s>0$, $W^s(M)$, is defined as the completion of the space $C^\infty(M)$ of infinitely differentiable functions with respect to the inner product 
$$
\la f,g \ra:=\la (1+\Delta)^sf,g \ra_{\mathcal{H}}. 
$$
We will denote by $\norm{\cdot}_6$ the norm in $W^6(M)$.

Let $\square:=-X^2-(V+\Theta)^2+\Theta^2 = \Delta + 2 \Theta^2$ be the \emph{Casimir operator}, a generator of the centre of the universal enveloping algebra of $\mathfrak{g}$. By the classical theory of unitary representations of $\SL(2,\R)$, we have the following orthogonal decomposition into irreducible components, listed with multiplicity:
$$
L^2(M)=\bigoplus_{\mu\in \Spec(\square)}H_\mu= \mathcal{H}_{p}\oplus \mathcal{H}_{c}\oplus \mathcal{H}_{d},
$$
where
$$ 
\mathcal{H}_{p}=\bigoplus_{\begin{subarray}{c}\mu\in \Spec(\square),\\ \mu\geq 1/4\end{subarray}} H_\mu,\qquad \mathcal{H}_{c}=\bigoplus_{\begin{subarray}{c}\mu\in \Spec(\square),\\ \mu\in(0,1/4)\end{subarray}} H_\mu,\qquad \mathcal{H}_{d}=\bigoplus_{\begin{subarray}{c}\mu\in \Spec(\square),\\ \mu=-n^2+n,\,n\in\Z_{\geq 0}\end{subarray}} H_\mu.
$$
The decomposition above induces a corresponding decomposition of the Sobolev spaces $W^r(M)$, for all $r>0$.

We call $\mathcal{H}_{p}$ the \emph{principal series}, $\mathcal{H}_{c}$ the \emph{complementary series}, and $\mathcal{H}_{d}$ the \emph{discrete series}. 
On each irreducible representation $H_{\mu}$, the Casimir operator acts as multiplication by the constant $\mu$. 
The representation $H_0$ is the trivial representation and appears with multiplicity 1.
We recall that the positive eigenvalues $\mu$ of the Casimir operator coincide with the eigenvalues of the Laplace-Beltrami operator on the surface $S = \Gamma \backslash \mathbb{H}$, in particular there is a \emph{spectral gap}: there exists $\mu_0 >0$ such that $(0,\mu_0) \cap \Spec(\square) = \emptyset$. 
Let us further define
$$
\nu_0 :=
\begin{cases}
\sqrt{1-4\mu_0} & \text{ if } \mu_0 < 1/4,\\
0 & \text{ if } \mu_0 \geq 1/4,
\end{cases}
\text{\ \ \ and\ \ \ }
\varepsilon_0 :=
\begin{cases}
0 & \text{ if } \mu_0 \neq 1/4,\\
1 & \text{ if } \mu_0 = 1/4.
\end{cases}
$$

In the second part of the paper, Appendix \ref{sec:appendix}, we will be interested in functions $\tau \in \mathcal{H}_d$. 
We remark that it follows from a recent work of D.\ Dolgopyat and O.\ Sarig \cite{DoSa} that functions coming from non-zero harmonic forms are not measurable coboundaries; in particular, there exist positive functions in $\mathcal{H}_d$ which are not measurable quasi-coboundaries, and hence generate a time-change which is not measurably conjugate to the horocycle flow.

\subsection{2-mixing estimates for time changes of the horocycle flow}

Let us denote by $(h^\tau_t)$ the time change of the horocycle flow $(h_t)$ induced by the positive function $\tau$. We make a standing assumption that $\int_M \tau \diff\mu=1$.

We recall a result of G.~Forni and C.~Ulcigrai, \cite{ForU}, on estimates of rates of 2-mixing for time-changes of the  horocycle flow.
In the homogeneous setting, optimal rates of mixing for the classical horocycle flow were obtained by M.~Ratner in \cite{Rat1}.

\begin{lemma}[Theorem 3, \cite{ForU}]\label{lem:fu} 
Let $\tau \in W^6(M)$, $\tau>0$ and let $(h_t^\tau)$ denote the time change induced by $\tau$. There exists a constant $C_0=C_0(\tau)>0$ such that for any functions $f,g\in W^6(M)$ and any $t >1$, we have 
$$
\left|\int_M (f\circ h^\tau_t)g \diff \mu^{\tau}- \int_M f \diff \mu^{\tau} \int_Mg \diff \mu^{\tau}\right|\leq C_0 \|f\|_6\|g\|_6 t^{-\frac{1-\nu_0}{2}} (\log t)^{\varepsilon_0}.
$$
\end{lemma}

In order to prove Theorem 3 in \cite{ForU}, the authors establish the following lemma, which will be useful for us as well.
\begin{lemma}[Lemma 18, \cite{ForU}]\label{lem:fu18} 
Let $\tau \in W^6(M)$, $\tau>0$ and let $(h_t^\tau)$ denote the time change induced by $\tau$. There exist constants $\overline{\sigma}, C_1 >0$ such that for any function $f\in W^6(M) \cap L^2_0(M)$, any $x \in M$, any $0 <\sigma < \overline{\sigma}$, and any $t>1$, we have 
$$
\left|\frac{1}{\sigma} \int_0^\sigma  f\circ h^\tau_t \circ g_r (x) \diff r \right|\leq C_1 \|f\|_6  (\sigma t)^{-\frac{1-\nu_0}{2}} (\log (\sigma t))^{\varepsilon_0}.
$$
\end{lemma}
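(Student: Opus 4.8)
The plan is to deduce the non-uniform ergodic-integral bound for the time-changed horocycle flow along a geodesic arc from the $2$-mixing estimate of Lemma \ref{lem:fu}, together with the renormalization equation \eqref{eq:ren}. First I would observe that the quantity $\frac{1}{\sigma}\int_0^\sigma f\circ h^\tau_t\circ g_r(x)\diff r$ is an average over a short geodesic arc of the function $f\circ h^\tau_t$; the idea is to realise this geodesic average as (the limit of) a horocycle ergodic average of an auxiliary function, so that Lemma \ref{lem:fu} applies. Concretely, I would fix a base point $x$, let $\sigma_0>0$ be small, and choose a smooth compactly supported bump $\psi$ on $M$ (depending on $x$ and $\sigma$) supported in a flow box around the geodesic segment $\{g_r(x): 0\le r\le\sigma\}$ that is transverse to the horocycle direction; integrating $f\circ h^\tau_t\cdot\psi$ against $\mu$ and using a Fubini/foliated-coordinates argument disintegrates the integral along horocycle orbits, with the geodesic average appearing as the leading term. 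The crucial simplification specific to this setting is that the geodesic and horocycle flows are on the original (homogeneous) $M$, only the time-$t$ map is time-changed; so I can treat $g_r$ classically.

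The key steps, in order, are: (i) set up a flow box / smooth partition adapted to the point $x$ and the length scale $\sigma$, producing a test function $\psi=\psi_{x,\sigma}$ with $\|\psi\|_6\lesssim \sigma^{-a}$ for an explicit $a$, such that $\int_M (f\circ h^\tau_t)\,\psi\,\diff\mu$ equals $\frac{1}{\sigma}\int_0^\sigma f\circ h^\tau_t\circ g_r(x)\diff r$ up to an error controlled by the modulus of continuity of $f\circ h^\tau_t$ in the horocycle and central directions; (ii) use the renormalization relation \eqref{eq:ren}, $h_s\circ g_r = g_r\circ h_{e^{r}s}$, to see that flowing the test function (or rather its support) by $g_r$ for $0\le r\le\sigma$ changes horocycle arc-lengths by a factor $e^{r}=1+O(\sigma)$, so the geometry of the box is essentially undistorted on this scale; (iii) apply the $2$-mixing bound of Lemma \ref{lem:fu} to $\int_M (f\circ h^\tau_t)\,\psi\,\diff\mu$, since $f\in L^2_0(M)$ gives $\int f\,\diff\mu^\tau$ small (or zero after the standard reduction) and $\int\psi\,\diff\mu^\tau$ is bounded, obtaining a bound of the form $C\|f\|_6\|\psi\|_6\, t^{-\frac{1-\nu_0}{2}}(\log t)^{\varepsilon_0}$; (iv) track the $\sigma$-dependence through both the main term and the flow-box error and optimise, so that the combination collapses to the stated $(\sigma t)^{-\frac{1-\nu_0}{2}}(\log(\sigma t))^{\varepsilon_0}$ — here one uses that the natural variable is $\sigma t$, because a geodesic push by $\log(1/\sigma)$ turns the $\sigma$-arc into a unit arc and simultaneously turns time $t$ into time $\sigma t$ via renormalization. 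Finally I would restrict to $\sigma<\overline\sigma$ so that all flow boxes are genuinely embedded and injectivity-radius issues do not arise, and to $t>1$ so the logarithm is harmless.

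The main obstacle I expect is step (iv): getting the error terms from the flow-box discretisation (item (i)) to be dominated by the main $2$-mixing term \emph{uniformly in $\sigma$ and $x$}. The modulus of continuity of $f\circ h^\tau_t$ in the horocycle direction can itself be large in $t$ (the time-change distorts speeds), so one must either bound it by $\|f\|_6$ times a power of $t$ and then absorb it — which forces the range of admissible $\sigma$ to shrink and may degrade the exponent — or, better, avoid differentiating $f\circ h^\tau_t$ altogether by choosing $\psi$ cleverly (e.g.\ averaging $\psi$ itself along the horocycle so that the transverse-to-geodesic error is what matters, and that direction is untouched by $h^\tau_t$ at leading order). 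Reconciling the two competing constraints on $\sigma$ is exactly the delicate point flagged in the introduction, and it is where the precise Sobolev order $6$ and the structure of the $2$-mixing bound get used. An alternative, cleaner route — which is likely what \cite{ForU} actually does — is to bypass the flow-box argument and instead invoke the Flaminio–Forni bounds on horocycle ergodic integrals directly for the \emph{coboundary-adjusted} function associated to $\tau$, pushing the geodesic arc by the renormalization flow to reduce to a unit horocycle arc; in that case the main work is the change of variables in the time-change cocycle $u(x,\cdot)$ under $g_r$, and the obstacle becomes controlling $\partial_r u(g_r x, t)$ via the regularity $\tau\in W^6(M)$.
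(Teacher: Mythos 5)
First, note that the paper you are working from does not prove this statement at all: it is quoted verbatim as Lemma 18 of \cite{ForU}, and the actual argument lives there (and is re-used, in expanded form, in Lemma \ref{lem:erren} and in Case B of Section \ref{sec:3mixproof}). Measured against that argument, your \emph{primary} route (flow box plus the $2$-mixing bound of Lemma \ref{lem:fu}) has a genuine gap, on two counts. Logically, it runs backwards: in \cite{ForU} the mixing estimate (their Theorem 3, i.e.\ Lemma \ref{lem:fu} here) is \emph{deduced from} Lemma 18 via the shearing mechanism, so deriving Lemma 18 from the mixing bound is circular with respect to the only available proof of that bound. Quantitatively, the reduction cannot deliver the stated estimate anyway: the conclusion is a pointwise bound, uniform in $x\in M$, along a single geodesic arc, so your test function $\psi$ must localize at some scale $\delta$ in the two directions transverse to the arc. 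Then $\|\psi\|_6\gtrsim (\sigma\delta^2)^{-1/2}\delta^{-6}$ after normalization, while the error in replacing $\int_M (f\circ h^\tau_t)\psi\,\diff\mu$ by the arc average involves the modulus of continuity of $f\circ h^\tau_t$ in the unstable ($V$) direction, which grows like $t^2$ (the time-change does not remove this expansion). No choice of $\delta$ makes both the error term $\lesssim \|f\|_6\,\delta\, t^{2}$ and the main term $\lesssim \|f\|_6\|\psi\|_6\, t^{-\frac{1-\nu_0}{2}}$ simultaneously of size $(\sigma t)^{-\frac{1-\nu_0}{2}}$; note also that $\sigma$ is given, not a parameter you may shrink, so the "optimisation" in your step (iv) has no free variable to absorb the loss. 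Averaging $\psi$ along the stable horocycle direction does not help, because the problematic direction is the unstable one.

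Your one-sentence \emph{alternative} is, in substance, the correct proof and is what \cite{ForU} does: use the time-change commutation relation $h^\tau_t\circ g_r = g_r\circ h^\tau_{e^{r}t+A(x,r,t)}$, where $A(x,r,t)$ is the ergodic integral of $\tau-\tau\circ g_r$ along the horocycle and is controlled by the Flaminio--Forni deviation bounds (exactly as in Lemma \ref{lem:erren}); then change variables $u=e^{r}t$, so that the geodesic average of length $\sigma$ becomes, up to errors $O(\sigma)+O(\|A\|_\infty/t)$, an average of $f\circ h^\tau_u$ over a horocycle-time window of length $(e^{\sigma}-1)t\asymp \sigma t$; finally apply the Flaminio--Forni-type bound for ergodic integrals of the time-changed flow (available because the $h^\tau$-orbit is a reparametrization of the horocycle orbit), which for zero-average $f$ gives the factor $(\sigma t)^{-\frac{1-\nu_0}{2}}(\log(\sigma t))^{\varepsilon_0}$. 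To turn your sketch into a proof you would need to carry out precisely these steps: the cocycle comparison for $u(g_rx,\cdot)$, the change of variables, and the ergodic-integral bound, with the restriction $\sigma<\overline\sigma$ and $t>1$ only needed to keep the error terms subordinate. The template for all of this is already in the paper's own Case B computation in Section \ref{sec:3mixproof}.
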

Let us fix $0 < \beta < \frac{1-\nu_0}{2}$ (or, in the case $\mu_0 \neq 1/4$, one can choose $\beta = \frac{1-\nu_0}{2}$), so that, in particular, Lemma \ref{lem:fu} and Lemma \ref{lem:fu18} hold with a bound of the form $t^{-\beta}$ and $(\sigma t)^{-\beta}$ respectively.

\subsection{Deviation of ergodic averages}

We will first state a result  on the growth of ergodic integrals, which is a straightforward consequence of Theorem 1.5 in \cite{FF}. 

\begin{lemma}\label{lem:dev}
Let $\tau\in W^6(M)$. There exists a constant $C_2$ such that for every $0 < s <1$, every $T>1$ and every $x\in M$, we have 
$$
\left\lvert \int_{0}^T (\tau-\tau\circ g_s)(h_tx) \diff t \right\rvert \leq C_2 s T^{1-\beta}.
$$
Moreover, if $\tau\in W^6(M) \cap \mathcal{H}_d$, the integral above is bounded by $C_2 s \log T$.
\end{lemma}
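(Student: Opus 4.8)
The plan is to derive the estimate from the Flaminio--Forni bounds on horocycle ergodic integrals (Theorem~1.5 of \cite{FF}), applied to the auxiliary function
$$
\psi_s:=\tau-\tau\circ g_s,
$$
once we observe that $\psi_s$ is small, of size $O(s)$, in a Sobolev norm. First, since $g_s$ preserves $\mu$, the function $\psi_s$ has zero mean, so $\psi_s\in W^5(M)\cap L^2_0(M)$. Second, by the fundamental theorem of calculus along the geodesic flow,
$$
\tau\circ g_s-\tau=\int_0^s (X\tau)\circ g_r\diff r,
$$
and, since $X$ maps $W^6(M)$ boundedly into $W^5(M)$ and the geodesic flow acts boundedly on $W^5(M)$ uniformly for $|r|\le 1$ (a standard property of the $G$-action on Sobolev spaces), this gives
$$
\|\psi_s\|_5\le\int_0^s\|(X\tau)\circ g_r\|_5\diff r\le C\,s\,\|\tau\|_6\qquad\text{for all }0<s<1.
$$

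Next I would invoke Theorem~1.5 of \cite{FF}: there is a constant $C=C(M)$ such that for every mean-zero $f\in W^5(M)$, every $x\in M$ and every $T>1$,
$$
\left\lvert\int_0^T f(h_t x)\diff t\right\rvert\le C\,\|f\|_5\,T^{\frac{1+\nu_0}{2}}(\log T)^{\varepsilon_0},
$$
while, if moreover $f\in\mathcal{H}_d$, the $h_t$-invariant distributions of \cite{FF} supported on the discrete series yield only the much sharper bound $\left\lvert\int_0^T f(h_t x)\diff t\right\rvert\le C\,\|f\|_5\,\log T$. Applying the first inequality to $f=\psi_s$ and inserting the bound on $\|\psi_s\|_5$ produces
$$
\left\lvert\int_0^T(\tau-\tau\circ g_s)(h_t x)\diff t\right\rvert\le C\,s\,\|\tau\|_6\,T^{\frac{1+\nu_0}{2}}(\log T)^{\varepsilon_0}.
$$
By the choice of $\beta$ one has $\frac{1+\nu_0}{2}\le 1-\beta$, with the logarithmic factor absent exactly in the case of equality, so $T^{\frac{1+\nu_0}{2}}(\log T)^{\varepsilon_0}\le C'\,T^{1-\beta}$ for $T>1$; taking $C_2:=CC'\|\tau\|_6$ gives the first claim. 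For the discrete-series refinement, recall that $\mathcal{H}_d$ is $G$-invariant, so if $\tau\in\mathcal{H}_d$ then $X\tau$ and each $\tau\circ g_r$, hence $\psi_s$, also lie in $\mathcal{H}_d$; rerunning the argument with the logarithmic ergodic-integral bound yields the estimate $C_2\,s\log T$.

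The only delicate point --- and the reason the hypothesis reads $\tau\in W^6(M)$ rather than $W^5(M)$ --- is that extracting the factor $s$ forces one derivative of $\tau$ in the geodesic direction, so the Flaminio--Forni estimates are used at Sobolev regularity $5$, which is within their range of validity. (Alternatively one can avoid losing a derivative by combining the renormalization identity $h_t\circ g_s=g_s\circ h_{e^s t}$ with the self-similarity of the special functions appearing in the Flaminio--Forni asymptotic expansion, so that the two ergodic integrals of $\tau$ nearly cancel up to a factor $1+O(s)$; but this route is less transparent and is not needed here.) Apart from this derivative bookkeeping and the routine matching of exponents, the statement is a direct consequence of Theorem~1.5 of \cite{FF}.
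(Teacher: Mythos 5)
Your argument is in substance the paper's own proof: apply Theorem~1.5 of Flaminio--Forni to $\psi_s=\tau-\tau\circ g_s$, extract a factor $s$ from a Sobolev bound on $\psi_s$, match $\frac{1+\nu_0}{2}(\,+\log\,)$ with $1-\beta$ via the choice of $\beta$, and for the refinement use that $\mathcal{H}_d$ is invariant under the geodesic flow so $\psi_s\in\mathcal{H}_d$. So the route is the same; the only substantive discrepancy is in the regularity bookkeeping that you yourself single out as the delicate point.

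There, your resolution is not quite right as stated: Theorem~1.5 of \cite{FF} is proved for Sobolev regularity $s>11/2$, so invoking it for $\psi_s\in W^5(M)$ is outside its stated range of validity, contrary to your claim. (The paper takes the mirror-image shortcut: it writes $\|\tau-\tau\circ g_s\|_6\leq C's$, which, taken literally, costs a derivative beyond $W^6$.) The gap is minor and fixable without changing the scheme: for instance, interpolate between $\|\psi_s\|_5\leq Cs\|\tau\|_6$ and $\|\psi_s\|_6\leq C\|\tau\|_6$ to get $\|\psi_s\|_{s'}\leq Cs^{\theta}\|\tau\|_6$ for some $s'\in(11/2,6)$ and $\theta=6-s'>0$, and apply the Flaminio--Forni bound at regularity $s'$; this yields the lemma with $s$ replaced by $s^{\theta}$, which suffices for every later use of Lemma~\ref{lem:dev} and of Lemma~\ref{lem:erren} after adjusting exponents (or, alternatively, assume the extra derivative on $\tau$ so the bound at level $6$ is literal). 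With that caveat noted, your discrete-series argument and the matching of exponents agree with the paper.
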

\begin{proof} 
By Theorem 1.5 in \cite{FF}, we have 
$$
\left|\int_{0}^T (\tau-\tau\circ g_s)(h_tx) \diff t\right|\leq C \|\tau-\tau\circ g_s\|_6 T^{\frac{1+\nu_0}{2}}(\log T)^{\varepsilon_0}  \leq C C's T^{\frac{1+\nu_0}{2}}(\log T)^{\varepsilon_0},
$$
which finishes the proof by the choice of $\beta$ and by taking $C_2=CC'$.
If we further assume that $\tau$ belongs to the discrete series, the estimate follows again from Theorem 1.5 in \cite{FF}, after noticing that the space $\mathcal{H}_d$ is invariant for the action of the geodesic flow $g_s$, so that $\tau-\tau\circ g_s\in \mathcal{H}_d$ for any $0<s<1$. 
\end{proof}

We remark that, since any non-trivial time change destroys the homogeneous structure, the commutation relation \eqref{eq:ren} does not in general hold for time-changes. Below we state an important lemma which estimates the error in the renormalization formula for the time changed flow.

\begin{lemma}\label{lem:erren} 
Let $\tau\in W^6(M)$, $\tau >0$. There exists $C_3>0$ such that for every $x\in M$, $0<s<1$ and $T>1$, we have 
$$
h_T^\tau \circ g_s(x)=g_s \circ h_{e^{s}T+ A(x,s,T)}^\tau (x),
$$
where 
$$
|A(x,s,T)|\leq C_3 s T^{1-\beta}.
$$
Moreover, if $\tau\in W^6(M)\cap \mathcal{H}_d$, we have $|A(x,s,T)|\leq C_3 s\log T$.
\end{lemma}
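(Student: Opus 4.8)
The plan is to compare the time-change cocycle along the geodesic-translated orbit with the one along the original orbit, and to convert the discrepancy into the error term $A(x,s,T)$ using the deviation estimate of Lemma \ref{lem:dev}. Fix $x \in M$, $0 < s < 1$, $T > 1$, and write $y := g_s(x)$. By the very definition of the time-change flow, $h^\tau_T(y) = h_{u(y,T)}(y)$, where $u = u(y,T)$ is the unique solution of $\int_0^{u} \tau(h_r y)\diff r = T$. The target identity $h_T^\tau \circ g_s(x) = g_s \circ h^\tau_{e^s T + A}(x)$ means, after applying $g_{-s}$ and using the homogeneous renormalization relation \eqref{eq:ren} in the form $g_{-s} h_{u} g_s = h_{e^{-s} u}$, that I need $e^{-s} u(y,T) = v(x, e^s T + A)$, where $v(x,\cdot)$ is the time-change cocycle along the $x$-orbit. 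Equivalently, setting $w := e^{-s} u(y,T)$, I must show that $\int_0^{w} \tau(h_r x)\diff r = e^s T + A$ with $A$ controlled as claimed; in other words $A = \int_0^{w}\tau(h_r x)\diff r - e^s T$.

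The second step is to rewrite the left-hand side using the change of variables $r \mapsto e^s r$ together with $h_{e^s r} x = g_{-s} h_r g_s x = g_{-s} h_r y$, which gives
$$
\int_0^{w} \tau(h_r x)\diff r = e^{-s}\int_0^{u(y,T)} \tau(g_{-s} h_\rho y)\diff \rho = e^{-s}\int_0^{u(y,T)} (\tau \circ g_{-s})(h_\rho y)\diff \rho.
$$
On the other hand, $T = \int_0^{u(y,T)} \tau(h_\rho y)\diff \rho$ by definition of $u$, so
$$
A = \int_0^{w}\tau(h_r x)\diff r - e^s T = e^{-s}\int_0^{u(y,T)} \bigl( (\tau\circ g_{-s}) - \tau\bigr)(h_\rho y)\diff \rho + (e^{-s} - e^s)\int_0^{u(y,T)}\tau(h_\rho y)\diff\rho \cdot 0,
$$
where the last term vanishes; so in fact $A$ equals the first integral, $A = e^{-s}\int_0^{u(y,T)} \bigl( (\tau\circ g_{-s}) - \tau\bigr)(h_\rho y)\diff \rho$. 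Here I should be slightly careful with signs: $g_{-s}$ versus $g_s$ enters because $y = g_s x$, but since $\tau \circ g_{-s} \in W^6(M)$ with $\| \tau \circ g_{-s}\|_6 \lesssim \|\tau\|_6$ uniformly for $|s| < 1$, Lemma \ref{lem:dev} (applied with $\tau$ replaced by $\tau \circ g_{-s}$, or directly, after trivially reversing the geodesic direction) yields $\bigl| \int_0^{u(y,T)} ((\tau\circ g_{-s}) - \tau)(h_\rho y)\diff\rho \bigr| \leq C_2 s\, (u(y,T))^{1-\beta}$.

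The last step is to control $u(y,T)$ itself: since $\tau$ is continuous and strictly positive on the compact manifold $M$, there are constants $0 < m \leq \|\tau\|_\infty < \infty$ with $m \leq \tau \leq \|\tau\|_\infty$ everywhere, hence $u(y,T) \leq T/m$. Plugging this in gives $|A(x,s,T)| \leq e^{-s} C_2 s (T/m)^{1-\beta} \leq C_3 s T^{1-\beta}$ for a suitable $C_3 = C_3(\tau)$, which is the claim. For the discrete-series refinement, the same argument applies verbatim using the second bound in Lemma \ref{lem:dev} (noting $\mathcal{H}_d$ is $g_s$-invariant, so $\tau \circ g_{-s} - \tau \in \mathcal{H}_d$), yielding $|A(x,s,T)| \leq C_3 s \log(T/m) \leq C_3' s \log T$ for $T$ large, and the constant can be adjusted to absorb the small-$T$ range. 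The main obstacle — really the only subtle point — is bookkeeping the renormalization relation \eqref{eq:ren} correctly with the right signs and the right base points, so that the error truly reduces to the ergodic integral of $\tau - \tau\circ g_s$ to which Lemma \ref{lem:dev} applies; once the identity $A = e^{-s}\int_0^{u(y,T)}((\tau\circ g_{-s}) - \tau)(h_\rho y)\diff\rho$ is established, the estimate is immediate.
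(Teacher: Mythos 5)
Your overall strategy is exactly the paper's: define $A$ implicitly through the time-change cocycle, use the renormalization relation \eqref{eq:ren} to express $A$ as an ergodic integral of a geodesic difference of $\tau$ along a horocycle orbit, apply Lemma \ref{lem:dev}, and bound the orbit length by $u \leq T/\inf_M \tau$ (the paper writes $A(x,s,T)=\int_0^{e^s u(g_s x,T)}(\tau-\tau\circ g_s)(h_t x)\diff t$, which is your final formula after the change of variables $t=e^s\rho$, up to the prefactor). However, your intermediate bookkeeping contains a genuine sign error which, taken literally, breaks the bound. Under \eqref{eq:ren}, $h_u\circ g_s=g_s\circ h_{e^s u}$, one has $g_{-s}\circ h_u\circ g_s=h_{e^{s}u}$, \emph{not} $h_{e^{-s}u}$; hence the identity to be verified is $u(x,e^sT+A)=e^{s}u(y,T)$, i.e.\ $w=e^{s}u(y,T)$, and the change of variables gives $\int_0^{w}\tau(h_r x)\diff r=e^{s}\int_0^{u(y,T)}(\tau\circ g_{-s})(h_\rho y)\diff\rho$ with prefactor $e^{s}$, so that $A=e^{s}\int_0^{u(y,T)}\bigl((\tau\circ g_{-s})-\tau\bigr)(h_\rho y)\diff\rho$ exactly, with no leftover term. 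With your choices ($w=e^{-s}u(y,T)$ together with the prefactor $e^{-s}$, which moreover is inconsistent with the relation $h_{e^s r}x=g_{-s}h_r y$ you invoke in the same step), the difference $\int_0^w\tau(h_r x)\diff r-e^sT$ acquires the extra term $(e^{-s}-e^{s})\int_0^{u(y,T)}\tau(h_\rho y)\diff\rho=(e^{-s}-e^{s})T$, which is of order $sT$ --- far larger than the claimed $sT^{1-\beta}$ --- and cannot be disposed of by writing ``$\cdot\,0$''; it disappears only once the signs in \eqref{eq:ren} are handled correctly.

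Once that correction is made, the remainder of your argument is sound and coincides with the paper's: Lemma \ref{lem:dev} applied with $\tau$ replaced by $\tau\circ g_{-s}$ (or, equivalently, changing variables back so that the integrand is $(\tau-\tau\circ g_s)(h_t x)$, to which the lemma applies verbatim), the bound $u(y,T)\leq T/\inf_M\tau$, the uniform comparability of $\|\tau\circ g_{-s}\|_6$ with $\|\tau\|_6$ for $|s|<1$, and the $g$-invariance of $\mathcal{H}_d$ for the discrete-series refinement; the harmless factor $e^{s}\leq e$ is absorbed into $C_3$.
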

\begin{proof} 
Let $A(x,s,T)$ be such that 
\be\label{eq:xst}
u(x,e^{s}T+ A(x,s,T))=e^{s}u(g_sx,T).
\ee
Notice that for every fixed $x \in M$, the function $u(x,\cdot)$ is strictly increasing, hence the term $A(x,s,T)$ as in \eqref{eq:xst} above is uniquely defined.
By definition of a time change and \eqref{eq:ren}, we have 
$$
h_T^\tau \circ g_s(x)=h_{u(g_sx,T)} \circ g_s(x)=g_s \circ h_{e^{s}u(g_sx,T)}(x)=g_sh_{e^{s}T+ A(x,s,T)}^\tau x.
$$
We only need to show that $A(x,s,T)$ given by \eqref{eq:xst} satisfies the desired estimate for some constant $C_3>0$.

By definition, we have 
$$
e^{s}T+A(x,s,T)=\int_{0}^{u(x,e^{s}T+A(x,s,T))}\tau(h_tx) \diff t.
$$
Changing variables $r=e^st$ and using \eqref{eq:ren}, we get
$$
e^{s}T=e^{s}\int_{0}^{u(g_sx,T)}\tau(h_t \circ g_s(x)) \diff t=\int_{0}^{e^{s}u(g_sx,T)}\tau(g_s \circ h_r(x)) \diff r.
$$
Therefore, \eqref{eq:xst} gives us 
$$
A(x,s,T)=\int_{0}^{e^{s}u(g_sx,T)}\tau(h_tx) \diff t-\int_{0}^{e^{s}u(g_sx,T)}\tau(g_sh_tx) \diff t.
$$
Using Lemma \ref{lem:dev}, we obtain
$$
|A(x,s,t)|=\left|\int_{0}^{e^{s}u(g_sx,T)}(\tau-\tau\circ g_s)(h_tx) \diff t\right|\leq C_2 s |e^{s}u(g_sx,T)|^{\frac{1+\nu_0}{2}} (\log |e^{s}u(g_sx,T)|)^{\varepsilon_0}.
$$
Since $\max_{x \in M} |u(x,T)| \leq \frac{1}{\inf_M \tau} T$, the proof is complete.
\end{proof}

\section{Van der Corput inequality}

We recall a version of the van der Corput's inequality, that will be useful in our setting. The following lemma is valid in general Hilbert spaces $H$, for simplicity we state it just for $H=L^2(X,\mu)$, where $(X,\mu)$ is a probability space. The notation $X= {\rm O}(Y)$ means that $X\leq cY$  for some global constant $c>0$.

\begin{lemma}[Van der Corput inequality]\label{lem:vdc} 
Let $(\phi_u)_{u\in \R}\subset L^2(X,\mu)$ with $\|\phi_u\|_{2}\leq 1$ for every $u\in \R$ and assume that $\la\phi_{u},\phi_w\ra=\la\phi_0,\phi_{w-u}\ra$ for every $u,w\in \R$. Then, for every $N>0$ and $0<L<N$, we have 
\be\label{eq:vdc}
\left\|\frac{1}{N}\int_{0}^N\phi_u \diff u\right\|_{2}\leq  \left[ \frac{2}{N} \int_0^N \left(\frac{1}{L}\int_0^L|\la \phi_u,\phi_{u+l}\ra|\diff l \right)\diff u\right]^{1/2}+{\rm O}\left(\frac{L}{N}\right)
\ee
\end{lemma}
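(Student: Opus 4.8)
The plan is to follow the classical van der Corput argument, expanding the square of the average and exploiting the shift-invariance of the correlation function $c(l) := \la \phi_0, \phi_l \ra$. First I would write
$$
\left\| \frac{1}{N} \int_0^N \phi_u \diff u \right\|_2^2 = \frac{1}{N^2} \int_0^N \int_0^N \la \phi_u, \phi_w \ra \diff u \diff w = \frac{1}{N^2} \int_0^N \int_0^N c(w-u) \diff u \diff w,
$$
using the hypothesis $\la \phi_u, \phi_w \ra = \la \phi_0, \phi_{w-u} \ra$. The standard trick is to replace $\phi_u$ by the ``windowed'' average $\psi_u := \frac{1}{L}\int_0^L \phi_{u+l} \diff l$ and compare. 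Concretely, since $\|\phi_u\|_2 \le 1$, for each fixed $u$ we have $\left\| \frac{1}{L}\int_0^L (\phi_u - \phi_{u+l}) \diff l \right\|_2 \le \frac{1}{L}\int_0^L \|\phi_u - \phi_{u+l}\|_2 \diff l \le 2$, but more usefully one shows that $\frac{1}{N}\int_0^N \phi_u \diff u$ and $\frac{1}{N}\int_0^N \psi_u \diff u$ differ by a term of size $\mathrm{O}(L/N)$ in $L^2$. This is because $\frac{1}{N}\int_0^N \psi_u \diff u = \frac{1}{N}\int_0^N \frac{1}{L}\int_0^L \phi_{u+l}\diff l \diff u$, and swapping the order of integration expresses it as a weighted average of $\phi_v$ for $v \in [0, N+L]$ with weights that agree with the uniform weight $1/N$ on $[L, N]$ up to the boundary layers of total length $\mathrm{O}(L)$; since each $\|\phi_v\|_2 \le 1$, the discrepancy is $\mathrm{O}(L/N)$.

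Next I would estimate $\left\| \frac{1}{N}\int_0^N \psi_u \diff u \right\|_2$ by Cauchy--Schwarz (Jensen) in $u$:
$$
\left\| \frac{1}{N}\int_0^N \psi_u \diff u \right\|_2^2 \le \frac{1}{N}\int_0^N \|\psi_u\|_2^2 \diff u = \frac{1}{N}\int_0^N \left\| \frac{1}{L}\int_0^L \phi_{u+l}\diff l \right\|_2^2 \diff u.
$$
Expanding the inner square and using shift-invariance again,
$$
\left\| \frac{1}{L}\int_0^L \phi_{u+l}\diff l \right\|_2^2 = \frac{1}{L^2}\int_0^L\int_0^L \la \phi_{u+l}, \phi_{u+l'}\ra \diff l \diff l' = \frac{1}{L^2}\int_0^L\int_0^L c(l-l')\diff l \diff l'.
$$
Bounding $|c(l-l')| = |\la \phi_u, \phi_{u+(l'-l)}\ra|$ (re-anchoring by shift-invariance so the argument matches the $\la \phi_u, \phi_{u+l}\ra$ appearing in the statement) and symmetrizing the double integral over the triangle $l' \le l$, one gets the inner quantity bounded by $\frac{2}{L}\int_0^L |\la \phi_u, \phi_{u+l}\ra| \diff l$ up to the same boundary considerations; averaging in $u$ yields the term under the square root in \eqref{eq:vdc}. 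Finally, combining the triangle inequality $\left\| \frac{1}{N}\int_0^N \phi_u \right\|_2 \le \left\| \frac{1}{N}\int_0^N \psi_u \right\|_2 + \mathrm{O}(L/N)$ with the bound just obtained gives the claim, after taking square roots (using $\sqrt{a+b}\le \sqrt a + \sqrt b$ is not even needed since the $\mathrm{O}(L/N)$ is already outside).

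The only mildly delicate point — the ``main obstacle'' such as it is — is bookkeeping the boundary terms carefully enough to see they are genuinely $\mathrm{O}(L/N)$ and not, say, $\mathrm{O}(\sqrt{L/N})$: this requires doing the comparison between $\frac{1}{N}\int_0^N\phi_u$ and $\frac{1}{N}\int_0^N \psi_u$ at the level of the vectors (where the error is a small-mass average of unit vectors, hence $\mathrm{O}(L/N)$) rather than after squaring. One also has to be slightly careful that the double-integral-over-a-triangle step introduces only a factor $2$ and an $\mathrm{O}(L/N)$-type correction when passing from $\frac{1}{L^2}\int_0^L\int_0^L |c(l-l')|$ to $\frac{2}{L}\int_0^L|c(l)|\diff l$; this is a routine change of variables $l \mapsto l - l'$ combined with $|c|\le 1$ to discard the part where the shifted variable exceeds $L$. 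Everything else is a direct computation with Fubini and Cauchy--Schwarz.
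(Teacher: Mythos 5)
Your proposal is correct and follows essentially the same route as the paper: compare $\frac{1}{N}\int_0^N\phi_u\,\diff u$ with the windowed average up to an ${\rm O}(L/N)$ error at the level of vectors, then apply Cauchy--Schwarz in $u$, expand the inner square, and use shift-invariance plus the change of variables $l=l_2-l_1$ to arrive at $\frac{2}{L}\int_0^L|\la\phi_u,\phi_{u+l}\ra|\,\diff l$. The only minor imprecision is your worry about an extra correction in the triangle/symmetrization step: none is needed, since $\frac{1}{L^2}\int_0^L\int_0^L|c(l_2-l_1)|\,\diff l_1\,\diff l_2=\frac{1}{L^2}\int_{-L}^{L}(L-|l|)\,|c(l)|\,\diff l\leq\frac{2}{L}\int_0^L|c(l)|\,\diff l$ holds outright.
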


\begin{remark} As mentioned before, the result above is true for general Hilbert spaces and without the extra invariance assumption on the $(\phi_u)_{u\in \R}$.  We will use Lemma \ref{lem:vdc} for $\phi_u=f\circ h^\tau_{u}$ for which the above assumption is satisfied. A nice proof of the more general statement in the non-quantitative version can be found in J.\ Moreira blogpost, \cite{JM}.
\end{remark}
The proof follows standard steps, we provide it here for completeness.
\begin{proof}[Proof of Lemma \ref{lem:vdc}] 
Notice first that 
$$
\left\|\frac{1}{N}\int_0^N\phi_u \diff u- \frac{1}{L}\int_0^L\left(\frac{1}{N}\int_0^N\phi_{u+l} \diff u \right) \diff l \right\|_{2}={\rm O}\left(\frac{L}{N}\right).
$$
Moreover, by Cauchy-Schwartz inequality, 
\begin{equation*}
\begin{split}
& \left\|\frac{1}{N}\int_0^N\left(\frac{1}{L}\int_0^L\phi_{u+l} \diff l \right)\diff u\right\|_{2} \leq \left[ \frac{1}{N} \int_0^N\left\|\frac{1}{L}\int_0^L\phi_{u+l} \diff l \right\|_2^2 \diff u\right]^{1/2}\\
& \quad = \left[  \frac{1}{N} \int_0^N\frac{1}{L^2}\int_0^L\left(\int_0^L \la\phi_{u+l_1},\phi_{u+l_2}\ra \diff l_1\right) \diff l_2 \diff u\right]^{1/2}\\
& \quad = \left[ \frac{1}{N} \int_0^N\frac{1}{L^2}\int_0^L \int_0^L \la\phi_{u},\phi_{u+l_2-l_1}\ra \diff l_1 \diff l_2 \diff u\right]^{1/2} \\
& \quad \leq  \left[ \frac{1}{N} \int_0^N\frac{1}{L}\int_{-L}^L \la\phi_{u},\phi_{u+l}\ra \diff l \diff u\right]^{1/2} =  \left[\frac{2}{N} \int_0^N\frac{1}{L}\int_0^L |\la\phi_{u},\phi_{u+l}\ra| \diff l \diff u\right]^{1/2},
\end{split}
\end{equation*}
where we use invariance and the fact that $-L \leq l_1 - l_2 \leq L$. This finishes the proof.
\end{proof}

The following observations will be important in what follows. 
\begin{remark}\label{rem:sobnom}  
There exist a constant $D>0$ such that for every $f\in W^6(M)$ and every $r \geq 1$, we have
$$
\|f\cdot f\circ h_r\|_{6}\leq D\|f\|_6^2 r^{6}.
$$
This follows from the fact that functions in $W^6(M)$ have the algebra property, i.e. $\|f\cdot g\|_6\leq D'\|f\|_6\|g\|_6$ and the fact that $\|f\circ h_r\|_{6}\leq D''\|f\|_6r^6$.
\end{remark}

\begin{remark}\label{rem:notation}
Given $f \in L^2(M)$, let $f^\perp:= f- (\int_M f \diff \mu^\tau) \in L^2_0(M)$. Then $\|f^\perp\|_6 \leq 2 \|f\|_6$.
\end{remark}

Using the van der Corput inequality in Lemma \ref{lem:vdc}, we can prove the important estimate below. Proposition \ref{lem:3mixcase} will be generalized in the Appendix \ref{sec:appendix}, see Proposition \ref{lem:l2}.

\begin{proposition}\label{lem:3mixcase}
For every $0<\alpha \leq 3/2$ there exists $\gamma>0$ such that for any $f_1,f_2 \in W^6(M) \cap L^2_0(M)$ with $\|f_1\|_6, \|f_2\|_6 \leq 1$, for any $n \neq m$ and for all $0<K<1$ satisfying $K >|n-m|^{- \alpha}$, we have
\begin{equation}
\left\|\frac{1}{n-m}\int_m^n  (f_1 \circ h^\tau_{K u})\cdot( f_2 \circ h^\tau_u)  \diff u\right\|_{2}\leq C_4 \left(|n-m| (1-K) \right)^{-\gamma},
\end{equation}
for some constant $C_4>0$.
\end{proposition}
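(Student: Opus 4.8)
The plan is to apply the van der Corput inequality (Lemma \ref{lem:vdc}) to the family $\phi_u := (f_1 \circ h^\tau_{Ku})\cdot(f_2 \circ h^\tau_u)$, suitably reparametrized so that the integration runs over $[0,N]$ with $N = n-m$. Since $\|f_1\|_6,\|f_2\|_6\leq 1$ and $W^6(M)\hookrightarrow L^\infty(M)$, we have $\|\phi_u\|_2\leq 1$ (up to a harmless Sobolev constant, which we absorb). The required invariance $\langle\phi_u,\phi_w\rangle=\langle\phi_0,\phi_{w-u}\rangle$ holds because $h^\tau_\bullet$ preserves $\mu^\tau$ and $\phi_{u+v}=\phi_u\circ h^\tau_v$ when $v$ ranges appropriately — more precisely one checks $\phi_{u+l}(x) = f_1(h^\tau_{K(u+l)}x) f_2(h^\tau_{u+l}x)$, and the inner product $\langle\phi_u,\phi_{u+l}\rangle$ depends only on $l$, so the hypotheses of Lemma \ref{lem:vdc} are met. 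Applying it with parameter $L$ to be chosen, the error term is ${\rm O}(L/N)$ and the main term is the square root of an average over $u\in[0,N]$ and $l\in[0,L]$ of $|\langle\phi_u,\phi_{u+l}\rangle|$.

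The heart of the argument is bounding $|\langle\phi_u,\phi_{u+l}\rangle|$ for $l$ in a range $[l_0,L]$ bounded away from $0$. We have
$$
\langle\phi_u,\phi_{u+l}\rangle = \int_M f_1(h^\tau_{Ku}x)\,\overline{f_1(h^\tau_{K(u+l)}x)}\, f_2(h^\tau_u x)\,\overline{f_2(h^\tau_{u+l}x)}\diff\mu^\tau(x).
$$
Using invariance of $\mu^\tau$ under $h^\tau_u$, this becomes $\int_M (f_1\circ h^\tau_{Kl})\cdot \bar f_1 \cdot (f_2\circ h^\tau_l)\cdot \bar f_2 \circ (\text{shift})$; the point is that this is a $2$-point correlation of the time-changed flow with gap comparable to $l$ on one pair of factors and to $Kl$ (equivalently, to $(1-K)l$ after subtracting) on the other — here is where the hypothesis $K>|n-m|^{-\alpha}$ and the factor $(1-K)$ enter. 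I would split into cases according to whether $l$ is of the same order as $Kl$ or much larger. In the generic case, one of the two "gaps" is $\gtrsim (1-K)l$ (or $\gtrsim l$): setting $F_1 := f_1\cdot(\text{const})$, $G := (f_1^\perp\circ h^\tau_{Kl})\cdot \bar f_1$ etc., one applies the $2$-mixing estimate of Lemma \ref{lem:fu} to the pair of functions with the larger separation, after replacing each $f_i$ by $f_i^\perp + \int f_i\diff\mu^\tau$ (Remark \ref{rem:notation}) and using the algebra property and the bound $\|f\circ h_r\|_6\lesssim \|f\|_6 r^6$ (Remark \ref{rem:sobnom}) to control the Sobolev norms of the products, which grow polynomially in $l$. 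This yields $|\langle\phi_u,\phi_{u+l}\rangle| \lesssim \big((1-K)\,l\big)^{-\beta}\cdot (\text{poly}(l))$ plus lower-order "diagonal" contributions coming from the constant parts $\int f_i\diff\mu^\tau$, which I would handle by observing that such terms either vanish (since $f_i\in L^2_0$) or reduce to a genuine $2$-mixing quantity again.

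Finally I would optimize: averaging the bound $\lesssim ((1-K)l)^{-\beta}\cdot l^{C}$ over $l\in[l_0,L]$ and adding the contribution $l\in[0,l_0]$ (trivially bounded by $l_0/L$) and the van der Corput error $L/N$, one chooses $l_0$ a small power of $N$, $L$ a small power of $N$, and checks that since $K>N^{-\alpha}$ with $\alpha\leq 3/2$ the factor $(1-K)^{-\beta}$ costs at most a fixed power of $N$ which is beaten by the saved powers, giving a net bound $\big(N(1-K)\big)^{-\gamma}$ for a sufficiently small $\gamma=\gamma(\alpha,\beta)>0$. The main obstacle I anticipate is the bookkeeping in the case analysis on $l$ versus $Kl$: one must ensure that in every regime at least one pair of factors in the $4$-fold product has separation $\gtrsim (1-K)l$ (not merely $\gtrsim Kl$, which could be tiny), and simultaneously that the polynomial loss $l^{C}$ from Sobolev norms of products and from the $r^6$ factors does not overwhelm the polynomial gain — this forces the delicate choice of the exponents for $l_0$ and $L$, and is exactly the place where the constraint $\alpha\leq 3/2$ is used.
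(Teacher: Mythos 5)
Your overall skeleton (van der Corput applied to $\phi_u = (f_1\circ h^\tau_{Ku})(f_2\circ h^\tau_u)$, decomposition via $f\mapsto f^\perp$, the $2$-mixing estimate of Lemma \ref{lem:fu}, Sobolev control of products via Remarks \ref{rem:sobnom}--\ref{rem:notation}, then optimization in $L$) matches the paper, but the key step is missing, and the substitute you propose would fail. Writing $\la\phi_u,\phi_{u+l}\ra=\int_M (f_1\cdot f_1\circ h^\tau_{Kl})\circ h^\tau_{K(u+m)}\cdot (f_2\cdot f_2\circ h^\tau_{l})\circ h^\tau_{u+m}\diff\mu^\tau$, the paper splits each block into mean plus $\perp$ part and then, for the cross term, uses invariance of $\mu^\tau$ under $h^\tau_{K(u+m)}$ to exhibit the separation $(1-K)(u+m)$ \emph{between the two blocks}: applying Lemma \ref{lem:fu} to $(f_1\cdot f_1\circ h^\tau_{Kl})^\perp$ and $(f_2\cdot f_2\circ h^\tau_l)^\perp$ gives a bound $\lesssim K^6l^{12}\big((u+m)(1-K)\big)^{-\beta}$. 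This separation, of order $N(1-K)$ after averaging in $u$, is the \emph{only} source of the factor $N(1-K)$ in the conclusion; the Sobolev loss $l^{12}$ is harmless because $l\le L$ is a small power of $N(1-K)$. The diagonal term is the product of the two autocorrelations, bounded by $C_0^2 K^{-\beta}l^{-2\beta}$ with \emph{no} polynomial loss (these are correlations of $f_i$ with itself, not of products), and this is the term whose average over $l\le L$ supplies decay in $L$.

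Your plan instead seeks decay purely in $l$-scale gaps ($l$, $Kl$, ``$(1-K)l$''), with a claimed bound $|\la\phi_u,\phi_{u+l}\ra|\lesssim \big((1-K)l\big)^{-\beta}\cdot\mathrm{poly}(l)$. This cannot work: the polynomial Sobolev loss (at least $l^{12}$ by Remark \ref{rem:sobnom}) overwhelms the gain $l^{-\beta}$ since $\beta<1$, so the bound grows in $l$, and, more fundamentally, it contains no dependence on $N$ at all, so averaging over $u\in[0,N]$ and $l\in[l_0,L]$ can never produce $(N(1-K))^{-\gamma}$ (taking $L$ bounded is not an option either, since then the $l$-average includes $l\approx 0$ where $|\la\phi_u,\phi_{u+l}\ra|\approx 1$). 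You have also misattributed the role of the hypothesis $K>|n-m|^{-\alpha}$, $\alpha\le 3/2$: it does not control $(1-K)^{-\beta}$ (a lower bound on $K$ says nothing about $1-K$; the small-$(1-K)$ regime is handled by reducing to $N(1-K)\ge 1$ and stating the bound in terms of $N(1-K)$). In the paper the hypothesis enters precisely to guarantee that the optimizing choice $L=\big((1-K)N\big)^{\beta/24}K^{-1/2}$ satisfies $L<N$ (so van der Corput applies and $L/N$ is a negative power of $N(1-K)$) and to tame the $K^{-\beta}$ loss in the diagonal term. Finally, your remark that the mean terms ``vanish since $f_i\in L^2_0$'' is incorrect: $\int_M f_i\cdot f_i\circ h^\tau_{\cdot}\diff\mu^\tau$ are autocorrelations, nonzero in general, and must be (and in the paper are) estimated by Lemma \ref{lem:fu}.
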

\begin{proof}
Notice that we always have 
$$
\left\|\frac{1}{n-m}\int_m^n  (f_1 \circ h^\tau_{K u})\cdot( f_2 \circ h^\tau_u)  \diff u\right\|_{2} \leq \|f_1\|_{\infty} \|f_2\|_{\infty} \leq \|f_1\|_{6} \|f_2\|_{6}\leq 1,
$$
so we can assume that $|n-m|(1-K) \geq 1$. Up to replacing $m$ with $n$, we can also assume that $n>m$.

We will use van der Corput inequality (see Lemma \ref{lem:vdc}) with $N=n-m$ and 
$$
\phi_u(x) := (f_1 \circ h^\tau_{K (u+m)})(x)\cdot( f_2 \circ h^\tau_{u+m})(x) \in L^2(M,\mu).
$$
Since $\|f_i\|_{6}\leq 1$, for every $u\in \R$, we have $\|\phi_u\|_{2} \leq \|f_1\|_{\infty} \|f_2\|_{\infty} \leq 1$. So, by Lemma \ref{lem:vdc}, we have to bound 
 $$
\frac{1}{L}\int_{0}^L|\la \phi_u,\phi_{u+l}\ra|\diff l,
 $$
and optimize for $0 < L \leq N = n-m$. 

With the notation introduced in Remark \ref{rem:notation}, we can estimate
\begin{equation*}
\begin{split}
|\la \phi_u,\phi_{u+l}\ra| = & \left|\int_{M} ( f_1\cdot f_1\circ h^\tau_{K l} ) \circ h^\tau_{K (u+m)}  \cdot (f_2\cdot f_2\circ h^\tau_{l})\circ h^\tau_{u+m}  \diff \mu^\tau \right|\\
\leq & \left|\int_M f_1\cdot (f_1\circ h_{K l}) \diff \mu^\tau \right| \cdot \left|\int_M f_2\cdot (f_2\circ h_{l}) \diff \mu^\tau \right| \\
&+ \left|\int_{M} ( f_1\cdot f_1\circ h^\tau_{K l} )^\perp \circ h^\tau_{K (u+m)}  \cdot (f_2\cdot f_2\circ h^\tau_{l})^\perp \circ h^\tau_{u+m}  \diff \mu^\tau \right|,
\end{split}
\end{equation*}
where we used the fact that, by definition, $( f_1\cdot f_1\circ h^\tau_{K l} )^\perp$ and $(f_2\cdot f_2\circ h^\tau_{l})^\perp$ have zero average.
Applying the mixing estimates of Lemma \ref{lem:fu}, since $\|f_i\|_6 \leq 1$, we get
\begin{equation*}
\begin{split}
|\la \phi_u,\phi_{u+l}\ra| & \leq \frac{C_0^2}{(Kl)^{\beta} l^{\beta}} + \left|\int_{M} ( f_1\cdot f_1\circ h^\tau_{K l} )^\perp \cdot (f_2\cdot f_2\circ h^\tau_{l})^\perp \circ h^\tau_{(u+m)(1-K)} \diff \mu^\tau \right| \\
&\leq \frac{C_0^2}{ K^{\beta} l^{2\beta}} + C_0 \| (f_1\cdot f_1\circ h^\tau_{K l})^\perp \|_6 \| (f_2\cdot f_2\circ h^\tau_{l})^\perp \|_6 \frac{1}{(u+m)^\beta (1-K)^\beta}.
\end{split}
\end{equation*}
From Remarks \ref{rem:sobnom}, \ref{rem:notation}, and $\|f_i\|_6 \leq 1$, it follows that 
$$
\| (f_1\cdot f_1\circ h^\tau_{K l})^\perp \|_6 \leq 2\| f_1\cdot f_1\circ h^\tau_{K l} \|_6 \leq 2 D(Kl)^6,
$$
and similarly for $ \| (f_2\cdot f_2\circ h^\tau_{l})^\perp \|_6$, so that 
$$
|\la \phi_u,\phi_{u+l}\ra| \leq C' \left(\frac{1}{(Kl)^{\beta} l^{\beta}} + \frac{K^6l^{12}}{(u+m)^\beta (1-K)^\beta} \right)
$$
where we can take $C' = \max \{C_0^2, 4D^2C_0\}$.

By Lemma \ref{lem:vdc}, recalling that $N=n-m$, we obtain
\begin{equation}\label{eq:vdc3mixing}
\begin{split}
& \left\|\frac{1}{n-m}\int_m^n  (f_1 \circ h^\tau_{K u})\cdot( f_2 \circ h^\tau_u)  \diff u\right\|_{2} \\
& \qquad \leq 2C' \left[\frac{1}{K^{\beta} L^{2\beta}} + \frac{K^6L^{12}}{ (1-K)^\beta N} \int_0^N \frac{\diff u}{(u+m)^\beta} \right]^{1/2} + O\left( \frac{L}{N} \right) \\
& \qquad \leq 2C' \left[\frac{1}{K^{\beta} L^{2\beta}} + \frac{K^6L^{12}}{ (1-K)^\beta (1-\beta) N^\beta} \right]^{1/2} + O\left( \frac{L}{N} \right).
\end{split}
\end{equation}
By assumption, there exists $0<\alpha \leq 3/2$ such that $K > N^{-\alpha}$. We fix
$$
L = \frac{\left( (1-K)N\right)^{\beta/24}}{K^{1/2}} \leq N^{\beta/24 +\alpha/2} \leq N^{1/24+3/4} < N,
$$
so that, moreover,
$$
\frac{L}{N} \leq \frac{1}{N^{5/24}} \leq \frac{1}{\left( (1-K)N\right)^{5/24}}.
$$
Thus, the term $O(L/N)$ in the right hand-side of \eqref{eq:vdc3mixing} satisfies an estimate of the desired form. It remains to bound the two summands in the square brackets in \eqref{eq:vdc3mixing}. 
By the choice of $L$, we get
$$
\frac{1}{K^\beta L^{2\beta}} \leq \frac{1}{\left( (1-K)N\right)^{\beta^2/12}},
$$
and
$$
\frac{K^6L^{12}}{ (1-K)^\beta (1-\beta) N^\beta} \leq \frac{\left( (1-K)N\right)^{\beta/2} }{ (1-K)^\beta (1-\beta) N^\beta} \leq \frac{2}{(1-\beta)\left( (1-K)N\right)^{\beta/2}},
$$
where we use the fact that $(1-K)N \geq 1$. This concludes the proof.
\end{proof}

\section{Polynomial 3-mixing}\label{sec:3mixproof}

This section is devoted to the proof of Theorem \ref{thm:main3}.
The strategy of the proof is similar to the proof of Theorem 1.1 in \cite{BEG}; however, since we are in the non algebraic setting, our reasoning is local and we use estimates on stretching of geodesic arcs of the time changed flow. We also use some ideas from Marcus' proof in \cite{Mar}.

The first step is to exploit the shearing property of the horocycle flow and its time changes: transverse segments in the geodesic direction get sheared by $h^{\tau}_t$. We will fix $\sigma = t_2^{-(1-\beta/3)}>0$ the length of such segments. 
The proof will be divided in two cases (\textbf{Case A} and \textbf{Case B} below), depending on the relative size of the gaps $t_1$ and $t_2-t_1$.
Roughly speaking, if $t_1$ is \lq\lq much smaller\rq\rq\ than $t_2-t_1$ (\textbf{Case A}), our choice of $\sigma$ will ensure that length of the sheared arc $h^{\tau}_{t_1} \circ g_s$ for time $t_1$ is sufficiently small, so that the correlations can be estimated, up to a small error, by the integral of $f_2$ along the arc $h^{\tau}_{t_2} \circ g_s$.
If $t_1$ and $t_2-t_1$ are \lq\lq of the same order\rq\rq\ (\textbf{Case B}), we will reduce the problem of estimating the multiple correlations to the setting of Proposition \ref{lem:3mixcase}, namely to a \lq\lq multiple ergodic integral\rq\rq.

\begin{proof}[Proof of Theorem \ref{thm:main3}]
Let $0=t_0<t_1<t_2$ be fixed. Up to considering the inverse flow $(h^\tau)^{-1}_t = h^\tau_{-t}$, composing with $h^\tau_{-t_2}$ and relabeling $t_{2-i}'=t_2 - t_i$, we can assume that $t_1\leq t_2-t_1$, so that, in particular $t_2-t_1 \geq t_2 / 2$. We will also assume that $t_1 \geq 1$

Let $f_0,f_1,f_2 \in W^6(M)$; define $C_f =  \|f_0\|_6 \|f_1\|_6 \|f_2\|_6$ and $C_{f,\tau} = \|\tau\|_6 C_f \geq C_f$.
Recalling the notation introduced in Remark \ref{rem:notation}, we have
\begin{equation*}
\begin{split}
&\left\lvert \int_M f_0 \cdot f_1 \circ h^\tau_{t_1} \cdot f_2 \circ h^\tau_{t_2} \diff \mu^\tau  - \left(\int_M f_0 \diff \mu^\tau \right)\left(\int_M f_1 \diff \mu^\tau \right)\left(\int_M f_2 \diff \mu^\tau \right)\right\rvert \\
& \quad \leq \sum_{i=0,1,2} \left\lvert \int_M f_i \diff \mu^\tau  \right\rvert \cdot \left\lvert \int_M \prod_{\{j,k\} = \{0,1,2\} \setminus \{i\}}f_j^\perp \cdot f_k^\perp \circ h^\tau_{t_k- t_j} \diff \mu^\tau \right\rvert +\\
& \quad \quad + \left\lvert \int_M f_0^\perp \cdot f_1^\perp \circ h^\tau_{t_1} \cdot f_2^\perp \circ h^\tau_{t_2} \diff \mu^\tau \right\rvert.
\end{split}
\end{equation*}
By the mixing estimates of Lemma \ref{lem:fu}, we can bound each term in the sum in the right hand-side above by
$$
\left\lvert \int_M f_i \diff \mu^\tau  \right\rvert \cdot \left\lvert \int_M \prod_{\{j,k\} = \{0,1,2\} \setminus \{i\}}f_j^\perp \cdot f_k^\perp \circ h^\tau_{t_k- t_j} \diff \mu^\tau \right\rvert \leq C_0 \|f_i\|_\infty \|f_j^\perp\|_6 \|f_k^\perp\|_6 |t_k-t_j|^{-\beta}, 
$$
thus, by Remark \ref{rem:notation}, we have 
\begin{equation*}
\begin{split}
&\left\lvert \int_M f_0 \cdot f_1 \circ h^\tau_{t_1} \cdot f_2 \circ h^\tau_{t_2} \diff \mu^\tau  - \left(\int_M f_0 \diff \mu^\tau \right)\left(\int_M f_1 \diff \mu^\tau \right)\left(\int_M f_2 \diff \mu^\tau \right)\right\rvert \\
& \quad \leq 12C_0 C_f \frac{1}{t_1^{\beta}} + \left\lvert \int_M f_0^\perp \cdot f_1^\perp \circ h^\tau_{t_1} \cdot f_2^\perp \circ h^\tau_{t_2} \diff \mu^\tau \right\rvert,
\end{split}
\end{equation*}
since $t_1 = \min \{ t_1, t_2-t_1\}$.
Therefore, it remains to bound the correlations for functions of zero average; we will simply denote $f_i$ instead of $f_i^\perp$, and we will assume that $f_i \in W^6(M) \cap L^2_0(M)$.

Let us define 
$$
0<\sigma:=\frac{1}{t_2^{1-\beta/3}}<1.
$$
We recall that the invariant measure $\mu^\tau$ is equivalent to the Haar measure $\mu$, with density $\tau$.
By invariance of $\mu$ under the geodesic flow, we have
\begin{equation}\label{eq:3mixshear}
\begin{split}
\left\lvert \int_M f_0 \cdot f_1 \circ h^\tau_{t_1} \cdot f_2 \circ h^\tau_{t_2} \diff \mu^\tau \right\rvert &= 
\left\lvert \int_M (\tau f_0) \cdot f_1 \circ h^\tau_{t_1} \cdot f_2 \circ h^\tau_{t_2} \diff \mu \right\rvert \\
&= \left\lvert \frac{1}{\sigma} \int_0^\sigma \int_M (\tau f_0) \circ g_s \cdot f_1 \circ h^\tau_{t_1}\circ g_s \cdot f_2 \circ h^\tau_{t_2}\circ g_s \diff \mu \diff s \right\rvert.
\end{split}
\end{equation}
For all $s \in [0,\sigma]$, we have 
$$
\|(\tau f_0) \circ g_s - \tau f_0\|_{\infty} \leq \|\tau f_0\|_6 s \leq \|\tau\|_6 \| f_0\|_6 \sigma,
$$
hence
\begin{equation}\label{eq:forcaseb}
\begin{split}
&\left\lvert \frac{1}{\sigma} \int_0^\sigma \int_M (\tau f_0) \circ g_s \cdot f_1 \circ h^\tau_{t_1}\circ g_s \cdot f_2 \circ h^\tau_{t_2}\circ g_s \diff \mu \diff s \right\rvert \\
& \quad = \left\lvert \int_M  \frac{1}{\sigma} \int_0^\sigma (\tau f_0) \circ g_s \cdot f_1 \circ h^\tau_{t_1}\circ g_s \cdot f_2 \circ h^\tau_{t_2}\circ g_s \diff s \diff \mu \right\rvert \\
& \quad \leq \left\lvert \int_M \tau f_0 \left( \frac{1}{\sigma} \int_0^\sigma f_1 \circ h^\tau_{t_1}\circ g_s \cdot f_2 \circ h^\tau_{t_2}\circ g_s \diff s \right) \diff \mu \right\rvert + \|\tau\|_6 \| f_0\|_6 \|f_1\|_\infty \|f_2\|_\infty \sigma \\
& \quad \leq \left\lvert \int_M f_0 \left( \frac{1}{\sigma} \int_0^\sigma f_1 \circ h^\tau_{t_1}\circ g_s \cdot f_2 \circ h^\tau_{t_2}\circ g_s \diff s \right) \diff \mu^\tau \right\rvert + C_{f,\tau} \sigma.
\end{split}
\end{equation}
We now estimate the first term in the right hand-side above in two different ways, depending on $t_1$.

\textbf{Case A.} 
Let us assume that $t_1\leq t_2^{1-\beta/2}$.
By Lemma \ref{lem:erren} and the triangle inequality,
\begin{equation*}
\begin{split}
\| f_1 \circ h^\tau_{t_1}\circ g_s - f_1 \circ h^\tau_{t_1}\|_\infty \leq &\| f_1 \circ g_s \circ h^\tau_{e^s t_1 + A(\cdot, s, t_1)} - f_1 \circ h^\tau_{e^s t_1 + A(\cdot, s, t_1)}\|_\infty \\
& +\| f_1 \circ h^\tau_{e^s t_1 + A(\cdot, s, t_1)} - f_1 \circ h^\tau_{t_1}\|_\infty, 
\end{split}
\end{equation*}
with $\|A(\cdot, s, t_1)\|_\infty \leq C_3 s t_1^{1-\beta}$. 
Therefore,
\begin{equation*}
\begin{split}
\| f_1 \circ h^\tau_{t_1}\circ g_s - f_1 \circ h^\tau_{t_1}\|_\infty \leq \|f_1\|_6 (s + (e^s-1)t_1 +  C_3 s t_1^{1-\beta}) \leq (3+C_3) \|f_1\|_6 \sigma t_1.
\end{split}
\end{equation*}
We obtain that
\begin{equation}\label{eq:endcasea}
\begin{split}
& \left\lvert \int_M f_0 \left( \frac{1}{\sigma} \int_0^\sigma f_1 \circ h^\tau_{t_1}\circ g_s \cdot f_2 \circ h^\tau_{t_2}\circ g_s \diff s \right) \diff \mu^\tau \right\rvert \\
& \quad \leq \left\lvert \int_M f_0 \cdot f_1 \circ h^\tau_{t_1} \left( \frac{1}{\sigma} \int_0^\sigma f_2 \circ h^\tau_{t_2}\circ g_s \diff s \right) \diff \mu^\tau \right\rvert + (3+C_3)  \|f_0\|_\infty \|f_1\|_6 \|f_2\|_\infty  \frac{1}{t_2^{\beta/6}}.
\end{split}
\end{equation}
Lemma \ref{lem:fu18}  gives us a uniform bound for the term in brackets in \eqref{eq:endcasea}. 
Combining \eqref{eq:3mixshear}, \eqref{eq:forcaseb}, \eqref{eq:endcasea}, and using Lemma \ref{lem:fu18}, we conclude
\begin{equation*}
\begin{split}
\left\lvert \int_M f_0 \cdot f_1 \circ h^\tau_{t_1} \cdot f_2 \circ h^\tau_{t_2} \diff \mu^\tau \right\rvert &\leq C_1 \|f_0\|_\infty \|f_1\|_\infty \|f_2\|_6 \frac{1}{(\sigma t_2)^\beta} + (3+C_3) C_{f} \frac{1}{t_2^{\beta/6}} + C_{f,\tau} \frac{1}{t_2^{1- \beta/3}} \\
&\leq C_5 C_{f,\tau} \frac{1}{t_2^{\gamma}},
\end{split}
\end{equation*}
for some constant $C_5>0$ and where $\gamma = \min\{ \beta^2/3, \beta/6 \}$. 
This concludes the proof for Case A.

\textbf{Case B.}
Let us now assume that $t_1 >  t_2^{1-\beta/2}$.
From \eqref{eq:forcaseb} and Cauchy-Schwartz inequality, we get
\begin{equation}\label{eq:3mixcaseb}
\left\lvert \int_M f_0 \cdot f_1 \circ h^\tau_{t_1} \cdot f_2 \circ h^\tau_{t_2} \diff \mu^\tau \right\rvert \leq \|f_0\|_2 \left\| \frac{1}{\sigma} \int_0^\sigma f_1 \circ h^\tau_{t_1}\circ g_s \cdot f_2 \circ h^\tau_{t_2}\circ g_s \diff s \right\|_2 + C_{f,\tau} \sigma.
\end{equation}
For any point $x \in M$ and any $0 \leq s \leq \sigma<1$, 
\begin{equation*}
\begin{split}
& |f_1 \circ h^\tau_{t_1}\circ g_s (x) \cdot f_2 \circ h^\tau_{t_2}\circ g_s (x)| = |f_1\circ g_s \circ h^\tau_{e^s t_1 + A(x,s,t_1)} (x) \cdot f_2\circ g_s \circ h^\tau_{e^s t_2 +A(x,s,t_2)}(x)| \\
&\quad \leq |f_1 \circ h^\tau_{e^s t_1 + A(x,s,t_1)} (x) \cdot f_2 \circ h^\tau_{e^s t_2 +A(x,s,t_2)}(x)| + \|f_1\|_6\|f_2\|_6 (2s+s^2).
\end{split}
\end{equation*}
By Lemma \ref{lem:erren}, 
$$
\max \{|A(x,s,t_1)|, |A(x,s,t_2)| \} \leq st_2^{1-\beta} \leq \sigma t_2^{1-\beta} = t_2^{-2\beta/3}.
$$
This implies that 
\begin{equation*}
\begin{split}
& |f_1 \circ h^\tau_{e^s t_1 + A(x,s,t_1)} (x) \cdot f_2 \circ h^\tau_{e^s t_2 +A(x,s,t_2)}(x)|   \leq  |f_1 \circ h^\tau_{e^s t_1} (x) \cdot f_2 \circ h^\tau_{e^s t_2}(x)| + \\
& \quad \quad + \|f_1\|_6\|f_2\|_6 (|A(x,s,t_1)| + |A(x,s,t_2)| + |A(x,s,t_1)| \cdot |A(x,s,t_2)| ) \\
& \quad \leq |f_1 \circ h^\tau_{e^s t_1} (x) \cdot f_2 \circ h^\tau_{e^s t_2}(x)| + 3 \|f_1\|_6\|f_2\|_6 t_2^{-2\beta/3},
\end{split}
\end{equation*}
therefore
$$
|f_1 \circ h^\tau_{t_1}\circ g_s (x) \cdot f_2 \circ h^\tau_{t_2}\circ g_s (x)| \leq |f_1 \circ h^\tau_{e^s t_1} (x) \cdot f_2 \circ h^\tau_{e^s t_2}(x)| + 6\|f_1\|_6\|f_2\|_6 |t_2|^{-2\beta/3}.
$$
Together with \eqref{eq:3mixcaseb}, we get
\begin{equation}\label{eq:3mixcasebend}
\left\lvert \int_M f_0 \cdot f_1 \circ h^\tau_{t_1} \cdot f_2 \circ h^\tau_{t_2} \diff \mu^\tau \right\rvert \leq \|f_0\|_2 \left\| \frac{1}{\sigma} \int_0^\sigma f_1 \circ h^\tau_{e^s t_1} \cdot f_2 \circ h^\tau_{e^s t_2} \diff s \right\|_2 + 7C_{f,\tau} |t_2|^{-2\beta/3}.
\end{equation}

We now estimate the first term in the right hand-side above using Proposition \ref{lem:3mixcase}.
Define $0< K= t_1/t_2 <1$. For all $x \in M$, changing variable $u = e^st_2$ and by the second mean-value theorem for integrals, there exists $z\in [t_2,e^\sigma t_2]$ such that
\begin{equation*}
\begin{split}
& \left\lvert \int_0^\sigma f_1 \circ h^\tau_{e^s t_1}(x) \cdot f_2 \circ h^\tau_{e^s t_2}(x) \diff s \right\rvert = \left\lvert \int_{t_2}^{e^\sigma t_2} f_1 \circ h^\tau_{Ku}(x) \cdot f_2 \circ h^\tau_{u}(x) \frac{\diff u}{u} \right\rvert  \\
& \quad  = \left\lvert \frac{1}{t_2} \int_{t_2}^{z} f_1 \circ h^\tau_{Ku}(x) \cdot f_2 \circ h^\tau_{u}(x) \diff u+\frac{1}{e^\sigma t_2} \int_{z}^{e^\sigma t_2} f_1 \circ h^\tau_{Ku}(x) \cdot f_2 \circ h^\tau_{u}(x) \diff u \right\rvert \\
& \quad \leq \left\lvert \frac{1}{t_2} \int_{t_2}^{e^\sigma t_2} f_1 \circ h^\tau_{Ku}(x) \cdot f_2 \circ h^\tau_{u}(x) \diff u\right\rvert+ \left\lvert\frac{1-e^\sigma}{e^\sigma t_2} \int_{z}^{e^\sigma t_2} f_1 \circ h^\tau_{Ku}(x) \cdot f_2 \circ h^\tau_{u}(x) \diff u \right\rvert,
\end{split}
\end{equation*}
and the second term in the right hand-side above is $\leq \|f_1\|_\infty \|f_2\|_\infty |e^\sigma-1|^2 \leq 4 \|f_1\|_6 \|f_2\|_6\sigma^2$.
This yields
\begin{equation*}
\begin{split}
\left\| \frac{1}{\sigma} \int_0^\sigma f_1 \circ h^\tau_{e^s t_1} \cdot f_2 \circ h^\tau_{e^s t_2} \diff s \right\|_2 \leq & \frac{e^\sigma - 1}{\sigma} \left\| \frac{1}{(e^\sigma-1)t_2} \int_{t_2}^{e^\sigma t_2} f_1 \circ h^\tau_{Ku} \cdot f_2 \circ h^\tau_{u} \diff u \right\|_2 \\
&+ 4 \|f_1\|_6 \|f_2\|_6\sigma.
\end{split}
\end{equation*}
Since we are in the case $t_1 > t_2^{1-\beta/2}$, we have that 
$$
\left\lvert \frac{1}{(e^\sigma-1)t_2} \right\rvert^{3/2} < \frac{1}{(\sigma t_2)^{3/2}}  = \frac{1}{t_2^{\beta/2}} < \frac{t_1}{t_2} = K.
$$
Hence, the assumptions of Proposition \ref{lem:3mixcase} are satisfied. We then get
\begin{equation*}
\begin{split}
\|f_0\|_2 \left\| \frac{1}{\sigma} \int_0^\sigma f_1 \circ h^\tau_{e^s t_1} \cdot f_2 \circ h^\tau_{e^s t_2} \diff s \right\|_2 &\leq CC_f \left( \left( \frac{e^\sigma - 1}{\sigma}  \right)  \frac{1}{|(e^\sigma-1)t_2(1-K)|^\gamma} + \frac{1}{t_2^{1-\beta/3}} \right) \\
& \leq C' C_f \left( \frac{1}{(\sigma(t_2-t_1))^\gamma} + \frac{1}{t_2^{1-\beta/3}} \right). 
\end{split}
\end{equation*}
Since $t_2-t_1 \geq t_2/2$, it follows $(\sigma(t_2-t_1))^{-\gamma} \leq 2^\gamma t_2^{-\beta\gamma/3}$.
This estimate and \eqref{eq:3mixcasebend} conclude the proof.
\end{proof}


\section{Appendix: higher order mixing for discrete-series reparametrizations}\label{sec:appendix}

In this section, we show how to refine the argument above to obtain polynomial mixing of all orders for time-changes $(h_t^\tau)_{t\in\R}$ supported on the discrete series, namely for $\tau\in W^6(M)\cap \mathcal{H}_d$.


\subsection{Preliminaries}

We start with the following definition.
\begin{definition}[Quantitative $k$-mixing]\label{def:qm} 
Let $(\varphi_t)_{t\in \R}$ be a measure preserving flow acting on $(X,\cB,\mu)$, let $L^2_0(X,\mu) = \{ f \in L^2(X,\mu) : \int_Xf \diff \mu =0 \}$, and let $\mathcal{F}\subset L^2_0(X,\mu)$ be a subspace equipped with a norm $\|\cdot\|_\mathcal{F}$. We say that $(\varphi_t)_{t\in \R}$ has the $Q(k,\mathcal{F})$- property for $k\in \N$ if there exist $\gamma_k>1, \beta_k>0$ such that for every $f_1,\ldots,f_k\in \mathcal{F}$ and every $(t_i)_{i=1}^k$ with $0=t_1\leq t_2\leq\ldots\leq t_k$, or with $t_k \leq \dots \leq t_2 \leq t_1=0$,  we have 
$$
\left|\int_X \prod_{i=1}^k f_i\circ \varphi_{t_i} \diff \mu\right|\leq C\left(\max \Big(1,\prod_{i=1}^k\|f_i\|_{\mathcal{F}} \Big)\right)^{\gamma_k}
\left(\frac{1}{\min_{0< i\leq k} (t_{i+1}-t_i)}\right)^{\beta_k},
$$
for some constant $C=C(k)>0$.
\end{definition}
We will use the above definition for $(h^\tau_t)$ on $(M,\mu)$ and $\mathcal{F}=W^6(M)\cap L^2_0(M,\mu)$ with $\|\cdot\|_\mathcal{F}=\|\cdot\|_6$. To shorten the notation we denote $Q(k)=Q(k,W^6(M)\cap L^2_0(M,\mu))$. Notice that by Lemma \ref{lem:fu}, it follows that $(h^\tau_t)$ has the $Q(2)$-property. The theorem below is a quantitative version of Proposition 1 in \cite{Mar}.

\begin{theorem}\label{main:th2}
Let $\tau\in W^6(M)\cap \mathcal{H}_d$. For every $k\geq 2$ if $(h^\tau_t)_{t\in\R}$ has the property $Q(\ell)$ for every $2\leq \ell\leq k$ then it has the property $Q(k+1)$. Moreover there exists an explicit lower bound on $\beta_k$ in terms of $\tau$,  $M$ and $k$ for every $k\geq 2$.
\end{theorem}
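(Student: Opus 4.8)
The plan is to make Marcus' clustering argument quantitative: use the geodesic flow $(g_s)$ as the transverse direction for a shearing estimate, and feed the hypotheses $Q(2),\dots,Q(k)$ into the many-ratios van der Corput bound of Proposition~\ref{lem:l2}, the generalisation of Proposition~\ref{lem:3mixcase}. The discrete-series assumption will enter at exactly one place, through Lemma~\ref{lem:erren}: the renormalisation error $A(\cdot,s,T)$ is \emph{logarithmic} in $T$ rather than polynomial.

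Fix $f_1,\dots,f_{k+1}\in W^6(M)\cap L^2_0(M)$ and $0=t_1<\dots<t_{k+1}=:T$. By rescaling — this is the role of the exponent $\gamma_{k+1}$ in Definition~\ref{def:qm} — I may assume $\|f_i\|_6\leq1$, and since the correlation is trivially bounded by $\prod_i\|f_i\|_\infty$, I may assume the minimal gap $\delta:=\min_i(t_{i+1}-t_i)$ is large. First, as in \eqref{eq:3mixshear}, invariance of $\mu$ under $g_s$ rewrites the correlation as a transverse average $\frac1\sigma\int_0^\sigma\int_M\prod_i(f_i\circ h^\tau_{t_i}\circ g_s)\,\diff\mu\,\diff s$, with the arc length $\sigma\in(0,1)$ to be chosen. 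Second, using Lemma~\ref{lem:erren} (discrete-series case) and the Sobolev embedding exactly as in Case~B of the proof of Theorem~\ref{thm:main3} — controlling post-composition with $g_s$ by $\|f_i\|_6 s$ and the time shift by $\|f_i\|_6\|A(\cdot,s,t_i)\|_\infty\leq C_3\|f_i\|_6\sigma\log T$ — I replace $h^\tau_{t_i}\circ g_s$ by $h^\tau_{e^st_i}$ in every factor at a total cost ${\rm O}_k(\sigma\log T)$. The substitution $u=e^st_i$ and the second mean-value theorem (cf.\ the change of variables in the proof of Theorem~\ref{thm:main3}) then turn the inner integral into a multiple ergodic average $\frac1N\int_T^{T+N}\prod_i f_i\circ h^\tau_{K_iu}\,\diff u$ at proportional times, where $N:=\sigma T$ and $K_i:=t_i/T$, up to a further error ${\rm O}_k(\sigma)$.

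Third, the dichotomy. If all ratios are bounded below, $t_i\geq TN^{-\alpha}$ for every $i\geq2$ (with $\alpha\in(1,3/2]$, as Proposition~\ref{lem:l2} allows), then I pull out $f_1$, which is constant in $u$ since $t_1=0$, and Cauchy--Schwarz followed by Proposition~\ref{lem:l2} applied to $f_2,\dots,f_{k+1}$ bounds the ergodic average by a negative power of $N\min_i(K_{i+1}-K_i)\geq\sigma\delta$; internally, Proposition~\ref{lem:l2} expands the van der Corput cross-terms $f_i\cdot f_i\circ h^\tau_{K_il}$, which are no longer centred, over the $2^k$ subsets and estimates the fully-constant part by Lemma~\ref{lem:fu} and the remaining parts by $Q(\ell)$ for the relevant $\ell\leq k$ — this is why one needs all of $Q(2),\dots,Q(k)$, not just $Q(k)$. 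Otherwise there is a maximal index $j$ with $t_j<TN^{-\alpha}\leq t_{j+1}$; then $g_s$ displaces each low time $t_1,\dots,t_j$ by at most $\sigma t_j\leq N^{1-\alpha}$, a negative power of $T$ because $\alpha>1$, so the low cluster $\prod_{i\leq j}f_i\circ h^\tau_{t_i}$ can be frozen out of the $s$-average, and Cauchy--Schwarz leaves the $L^2$-norm of a transverse average of the $\leq k$ functions $f_{j+1},\dots,f_{k+1}$, now all with ratios $\geq N^{-\alpha}$; this is handled by Lemma~\ref{lem:fu18} if a single function remains and by Proposition~\ref{lem:l2} otherwise, again with decay in $\sigma\delta$, since every gap in the high cluster is $\geq\delta$. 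Splitting may have to be iterated over several large gaps, but each split strictly decreases the number of functions, so the recursion terminates and never invokes $Q(\ell)$ for $\ell>k$.

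The crux — and the step I expect to be the main obstacle — is the choice of $\sigma$. Proposition~\ref{lem:l2} yields decay only in $\sigma\delta$, which forces $\sigma$ above a fixed power of $\delta^{-1}$, while the control of the non-homogeneous deviations, i.e.\ the terms ${\rm O}_k(\sigma\log T)$ and $N^{1-\alpha}=(\sigma T)^{1-\alpha}$, forces $\sigma$ below a fixed power of $T^{-1}$, up to the harmless factor $\log T$. As $\delta$ can be far smaller than $T$, these requirements are compatible only because the deviation in Lemma~\ref{lem:erren} is logarithmic in $T$; this is precisely where the discrete-series hypothesis on $\tau$ is used, and precisely where the argument would fail for general $\tau$ already at $4$-mixing, as anticipated in the introduction. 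Choosing $\sigma$ as a suitable small power of $\delta$, balanced against $\log T$ and against the scale at which one splits, makes every term $\leq C\delta^{-\beta_{k+1}}$ for an explicit $\beta_{k+1}=\beta_{k+1}(\beta,\beta_2,\dots,\beta_k,\nu_0,k)>0$, namely the minimum of the exponents produced above; this establishes $Q(k+1)$, and unwinding the recursion down to $Q(2)$ (Lemma~\ref{lem:fu}) gives the asserted explicit lower bound on $\beta_k$ for every $k\geq2$. Besides this optimisation, the only other delicate point is making the renormalisation-error estimate of the second step uniform over all $k+1$ factors; the clustering recursion itself and the bookkeeping of the $2^k$ cross-terms are routine once Proposition~\ref{lem:l2} is available.
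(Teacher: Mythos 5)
Your outline assembles the right tools (shearing along $g_s$, the logarithmic bound in Lemma \ref{lem:erren}, Proposition \ref{lem:l2}, induction on $Q(\ell)$), but it has a genuine gap exactly at the point you flag as "the crux", and the resolution you propose there does not work. Your dichotomy detects only times that are small \emph{relative to} $t_k$ (small ratios $K_i$); it has no mechanism for configurations in which two \emph{large} times nearly coincide, e.g.\ $t_i\approx t_{i+1}\approx t_k/2$ with $t_{i+1}-t_i=\delta$ of size, say, $\log t_k$ (so $\delta\to\infty$ but $\delta$ is below every fixed power of $t_k$). In that regime all ratios are bounded below, so your first branch applies, and Proposition \ref{lem:l2} only yields decay in $\sigma\delta$; but the renormalisation error is of order $\sigma\log^k t_k$, so you would need simultaneously $\sigma\geq\delta^{c-1}$ and $\sigma\log^k t_k\leq\delta^{-\beta}$, which is impossible once $\delta\lesssim(\log t_k)^{k}$, no matter how $\sigma$ is chosen. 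The logarithmic deviations do \emph{not} remove this obstruction: their role is only to let $\sigma$ be an arbitrarily small negative power of $t_k$ (rather than forcing $\sigma\lesssim t_k^{-(1-\beta)}$ as polynomial deviations would), which is necessary but not sufficient. What actually reconciles the two constraints in the paper is a combinatorial clustering procedure that your proposal lacks.

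Concretely, the paper runs an inductive clustering of the times at scales $r_1=t_k^{\zeta_1/12k}$, $r_{\ell+1}=r_\ell^{\zeta_{\ell+1}/12k}$ with $\zeta_i=\beta_i/\gamma_i$. If the procedure stops early (\textbf{Case A}), nearby times are merged into single observables $\tilde f_i=\prod_{t_j\in[t_{s_i}-r_\ell,t_{s_i}+r_\ell]}f_j\circ h_{t_j-t_{s_i}}$, whose Sobolev norms grow like $r_\ell^{6}$ per factor; one then applies $Q(\ell+1)$ to the $\leq k$ clustered observables separated by $\geq r_{\ell-1}$, and — crucially — the decay in the \emph{minimal gap} comes from the $Q(w_i)$ bounds on the cluster means $\int_M\tilde f_i\,\diff\mu^\tau$, not from Proposition \ref{lem:l2}; the calibration $\zeta_{\ell+1}=\beta_{\ell+1}/\gamma_{\ell+1}$ is what beats the Sobolev-norm loss $r_\ell^{6k\gamma_{\ell+1}}$ by $r_{\ell-1}^{-\beta_{\ell+1}}$. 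Only when the procedure survives to Step $k$ (\textbf{Case B}) is the van der Corput estimate invoked, and then one has the key inequality \eqref{eq:stopk}, $\min_i(t_{i+1}-t_i)\geq t_k^{\xi_k}$, so that $\sigma=t_k^{-\alpha^2}$ with $\alpha^2<\xi_k$ makes $\sigma\delta$ a positive power of $\delta$ while keeping the error $\sigma\log^k t_k$ admissible. Your proposal never applies the hypotheses $Q(2),\dots,Q(k)$ to clusters of nearby times (you use them only inside Proposition \ref{lem:l2}), and without that step — or some substitute producing decay in gaps that are large in absolute terms but tiny compared to $t_k$ — the argument cannot close.
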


Theorem \ref{thm:main} follows easily from Theorem \ref{main:th2} above.

\begin{proof}[Proof of Theorem \ref{thm:main}]
By Lemma \ref{lem:fu}, the time change $(h^\tau_t)$ has the $Q(2)$-property; hence, by Theorem \ref{main:th2}, it has property $Q(k)$ for all $k\geq 2$. Let $ f_0,\ldots,f_{k-1}\in W^6(M)$. For each $0\leq i \leq k-1$, denote $\bar{f}_i = \int_M f \diff \mu \in \R$ and $f_i^{\perp} = f_i - \bar{f}_i \in W^6(M) \cap L^2_0(M)$. We have
 \begin{equation}\label{asd2}
\begin{split}
& \left|\int_M \prod_{i=0}^{k-1} f_i\circ h^\tau_{t_i} \diff \mu-\prod_{i=0}^{k-1}\int_M f_i \diff \mu\right| = \left|\int_M \prod_{i=0}^{k-1} (f^{\perp}_i + \bar{f}_i )\circ h^\tau_{t_i} \diff \mu-\prod_{i=0}^{k-1}\bar{f}_i \right| \\
& \leq \sum_{J \subseteq \{ 0,\dots,k-1 \},\ |J| \geq 2} \left(\prod_{i \in J^c} |\bar{f}_i | \right) \left|\int_M \prod_{j \in J} f^{\perp}_j\circ h^\tau_{t_j} \diff \mu \right| \\
& \leq C' \sum_{J \subseteq \{ 0,\dots,k-1 \},\ |J| \geq 2} \left(\frac{1}{\min_{j,j' \in J} |t_{j'}-t_j|}\right)^{\beta_{|J|}}  \leq C\left(\frac{1}{\min_{0\leq i<j\leq k-1} |t_i-t_j|}\right)^{\gamma},
\end{split}
\end{equation}
where $\gamma = \min\{ \beta_j : 2\leq j\leq k-1\}$. This concludes the proof.
\end{proof}

\subsection{A van der Corput estimate}

We need to generalize the statement of Proposition \ref{lem:3mixcase} for all $k \geq 2$. The proof is analogous, with additional technical difficulties.

\begin{proposition}\label{lem:l2}
Let $k\in \N$ and assume that $(h^\tau_t)$ satisfies the $Q(\ell)$-property for $2\leq \ell \leq k$. 
There exists $\eta_k>0$ such that  for every $0<\epsilon<1 + 1/k$ there exists $\delta=\delta(\epsilon,k)>0$  such that for any $f_i\in W^6(M)\cap L^2_0(M)$, for $1\leq i\leq k$, for every $n \neq m$, and for every $|n-m|^{-\epsilon}<K_1<\ldots <K_k=1$, we have 
\begin{multline}\label{eq:l2b}
\left\|\frac{1}{n-m}\int_m^n  \prod_{i=1}^kf_i\circ h^\tau_{K_i u} \diff u\right\|_{2}\leq \\C\left(\max \Big( 1,\prod_{i=1}^k\|f_i\|_{6} \Big)\right)^{\eta_k}\left(|n-m|\min_{1\leq i\leq k}(K_{i+1}-K_i)\right)^{-\delta},
\end{multline}
for some constant $C=C_k>0$.
\end{proposition}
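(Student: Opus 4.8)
The plan is to induct on $k$ using a van der Corput argument, exactly as in the proof of Proposition \ref{lem:3mixcase} but keeping careful track of how the iterated correlations decompose. The base case $k=1$ is just Lemma \ref{lem:fu18} (applied with the rescaling $u \mapsto K_1 u = u$), or more precisely a trivial $L^2$ bound combined with $Q(1)$ being vacuous; in any case $k=2$ is precisely Proposition \ref{lem:3mixcase}, so we may take that as the starting point. For the inductive step, fix $f_1,\dots,f_k$, set $N = |n-m|$, and (up to swapping $m$ and $n$) assume $n>m$ and that $N \min_i(K_{i+1}-K_i) \geq 1$, since otherwise the trivial bound $\|\cdot\|_2 \leq \|f_1\|_\infty\cdots\|f_k\|_\infty \leq \prod\|f_i\|_6$ already gives the claim. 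Apply Lemma \ref{lem:vdc} with $\phi_u(x) = \prod_{i=1}^k (f_i \circ h^\tau_{K_i(u+m)})(x)$, which has $\|\phi_u\|_2 \leq \prod\|f_i\|_6$ and satisfies the required shift-covariance since each factor does. One must then bound, for $0<L<N$,
$$
\frac{1}{L}\int_0^L |\langle \phi_u, \phi_{u+l}\rangle| \diff l,
\qquad
\langle \phi_u,\phi_{u+l}\rangle = \int_M \prod_{i=1}^k \big( f_i \cdot (f_i \circ h^\tau_{K_i l})\big)\circ h^\tau_{K_i(u+m)} \diff\mu^\tau.
$$

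The core of the argument is the decomposition of this inner product. Writing $g_i^{(l)} := (f_i \cdot f_i \circ h^\tau_{K_i l})$ and $g_i^{(l),\perp}$ for its zero-average part (so that by Remark \ref{rem:sobnom} and Remark \ref{rem:notation}, $\|g_i^{(l),\perp}\|_6 \leq 2D\|f_i\|_6^2 (K_i l)^6 \leq 2D\|f_i\|_6^2 l^6$), expand $\prod_i (g_i^{(l),\perp} + \bar g_i^{(l)})\circ h^\tau_{K_i(u+m)}$. The term with every factor replaced by its mean $\bar g_i^{(l)} = \int_M f_i \cdot f_i\circ h_{K_i l}\diff\mu^\tau$ is estimated by the product of $2$-mixing bounds from Lemma \ref{lem:fu}: it is $\leq \prod_i C_0 \|f_i\|_6^2 (K_i l)^{-\beta} \leq C_0^k (\prod\|f_i\|_6^2)(K_1 l)^{-\beta} \cdots$, hence $\lesssim l^{-k\beta}$ up to a factor depending on $K_1$ (and one uses $K_1 > N^{-\epsilon}$ to control $K_1^{-\beta}$ by a small power of $N$, contributing to the $\delta$ loss). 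Every remaining term contains at least two $\perp$-factors and, after relabeling indices so the surviving factors are $g_{i_1}^{(l),\perp},\dots,g_{i_r}^{(l),\perp}$ with $r\geq 2$, is an $r$-fold correlation of the form $\int_M \prod_{j} g_{i_j}^{(l),\perp}\circ h^\tau_{K_{i_j}(u+m)}\diff\mu^\tau$; dividing out the smallest time $K_{i_1}(u+m)$ via invariance, this is a correlation at times $(K_{i_j}-K_{i_1})(u+m)$, $j\geq 2$, so the $Q(\ell)$-hypothesis (for $\ell = r \leq k$) applies and bounds it by $C(\prod_j\|g_{i_j}^{(l),\perp}\|_6)^{\gamma_r}\big((u+m)\min_{j<j'}(K_{i_{j'}}-K_{i_j})\big)^{-\beta_r} \lesssim (\prod\|f_i\|_6^2)^{\gamma_r} l^{12\gamma_r}\,(u+m)^{-\beta_r}(\min_i(K_{i+1}-K_i))^{-\beta_r}$. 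Collecting all $O(2^k)$ terms, we obtain a bound of the shape
$$
|\langle\phi_u,\phi_{u+l}\rangle| \leq C''(\textstyle\max(1,\prod\|f_i\|_6))^{\eta'} \Big( \frac{K_1^{-\beta}}{l^{k\beta}} + \frac{l^{P}}{(u+m)^{\beta^\ast}}\,(\min_i(K_{i+1}-K_i))^{-\beta^\ast}\Big),
$$
for suitable exponents $P$ (a power like $12\max_r\gamma_r$) and $\beta^\ast = \min_{2\leq r\leq k}\beta_r > 0$.

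The final step is the optimization over $L$, following \eqref{eq:vdc3mixing}–\eqref{eq:vdc} verbatim. Integrating in $u$ over $[0,N]$ (with $m\geq 0$, say $m\geq 1$ after the earlier reductions, so $\int_0^N (u+m)^{-\beta^\ast}\diff u \leq \frac{1}{1-\beta^\ast}N^{1-\beta^\ast}$ if $\beta^\ast<1$, or $\log N$ otherwise — handled by taking $\beta^\ast$ slightly smaller WLOG), Lemma \ref{lem:vdc} gives
$$
\Big\|\tfrac1N\int_m^n \phi_{u-m}\diff u\Big\|_2 \lesssim (\textstyle\max(1,\prod\|f_i\|_6))^{\eta'/2}\Big[\frac{K_1^{-\beta}}{L^{k\beta}} + \frac{L^P (\min_i(K_{i+1}-K_i))^{-\beta^\ast}}{N^{\beta^\ast}}\Big]^{1/2} + O\!\Big(\frac{L}{N}\Big).
$$
Choosing $L$ to be a small fractional power of $N\cdot\min_i(K_{i+1}-K_i)$ — concretely, balancing the two bracketed terms while keeping $L/N$ small, i.e. $L \approx \big(N\min_i(K_{i+1}-K_i)\big)^{\theta}$ with $\theta>0$ tiny depending on $k$, and using $K_1^{-\beta} \leq N^{\epsilon\beta}$ together with $\epsilon < 1+1/k$ to absorb the $K_1$ loss — yields the bound $\leq C(\max(1,\prod\|f_i\|_6))^{\eta_k}(N\min_i(K_{i+1}-K_i))^{-\delta}$ for an explicit $\delta=\delta(\epsilon,k)>0$ and $\eta_k$ depending only on $k$ (through the $\gamma_r$). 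The main obstacle — and the source of the "additional technical difficulties" alluded to in the text — is the bookkeeping in the multi-term expansion: one must check that \emph{every} non-principal term genuinely carries at least two $\perp$-factors (so that $Q(\ell)$ with $\ell\geq 2$ is available and never $Q(1)$), that the minimal gap among the surviving rescaled times $(K_{i_j}-K_{i_1})(u+m)$ is still bounded below by $\min_i(K_{i+1}-K_i)\cdot(u+m)$, and that the exponent $\epsilon$-constraint $\epsilon<1+1/k$ is exactly what is needed for the choice of $L$ to remain $<N$ after the $K_1^{-\beta}$ factor is accounted for — this last point mirrors the role of $\alpha\leq 3/2$ in Proposition \ref{lem:3mixcase}.
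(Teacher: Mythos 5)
Your overall strategy is the same as the paper's: van der Corput with $\phi_u=\prod_{i=1}^k f_i\circ h^\tau_{K_i(u+m)}$, splitting each $f_i\cdot f_i\circ h^\tau_{K_il}$ into its mean plus zero-average part, discarding the terms with exactly one $\perp$-factor by invariance, invoking $Q(\ell)$ for the terms with $\ell=|J^c|\geq 2$ surviving factors, and optimizing over $L$. The genuine gap is in your treatment of the all-means term. You bound it by the product of the $k$ two-point decays, which produces a loss $K_1^{-\beta}$ (in fact $\prod_i K_i^{-\beta}$), and you propose to absorb it via $K_1>N^{-\epsilon}$, i.e. by an $N^{\epsilon\beta}$ loss. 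This cannot be absorbed: the required bound is a negative power of $N\min_i(K_{i+1}-K_i)$, and this quantity can be an arbitrarily small power of $N$ while $K_1$ is simultaneously close to $N^{-\epsilon}$ (e.g. $k=3$, $K_1=N^{-\epsilon/2}$, $K_2=K_1+N^{-1+c}$, $K_3=1$ with $c$ tiny, so $N\min_i(K_{i+1}-K_i)=N^{c}$ but $K_1^{-\beta}=N^{\epsilon\beta/2}$). With your choice $L\approx\big(N\min_i(K_{i+1}-K_i)\big)^{\theta}$ the first bracketed term is then of size $N^{\epsilon\beta/2-k\theta\beta c}$, a \emph{positive} power of $N$, so no $\delta>0$ can be extracted; and no other admissible $L<N$ is guaranteed to balance the two terms, since enlarging $L$ to kill $K_1^{-\beta}$ costs $L^{6|J^c|\gamma_{|J^c|}}$ with $\gamma_{|J^c|}>1$ against a gain of only $N^{-\beta_{|J^c|}}$. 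Your appeal to Proposition \ref{lem:3mixcase} as a precedent is also slightly off: there the $K^{-\beta}$ loss is not traded against a power of $N$ but cancelled exactly, by building the factor $K^{-1/2}$ into $L$ (the hypothesis $\alpha\leq 3/2$ only serves to keep $L<N$ afterwards).

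The paper threads this needle differently, and this is the missing idea. For the all-means term it applies the two-point decay \emph{only to the last factor}, where $K_k=1$, bounding the other $k-1$ means trivially by $\|f_i\|_6^2$; this gives $a_L\leq C_f L^{-\beta}$ with no $K$-dependence at all. The $K_i$'s are then handled through a $K$-dependent choice of $L$, essentially $L^{|J^c|}\prod_{i\in J^c}K_i\le\big(N\min_i(K_{i+1}-K_i)\big)^{\theta}$ with the minimizing subset also giving $L^k\geq\big(N\min_i(K_{i+1}-K_i)\big)^{\theta}$ (see \eqref{prop:ell} and \eqref{eq:kL}): property \eqref{prop:ell} makes the Sobolev factors $\prod_{i\in J^c}K_i^{6\gamma_{|J^c|}}L^{6|J^c|\gamma_{|J^c|}}$ of the $\perp$-terms cancel exactly, property \eqref{eq:kL} turns $a_L\leq C_fL^{-\beta}$ into the desired power of $N\min_i(K_{i+1}-K_i)$, and the condition $\epsilon<1+1/k$ enters only in checking $L/N\leq N^{\theta/k+\epsilon(k-1)/k-1}$. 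Two smaller points: the product $\phi_u$ does not inherit the covariance $\la\phi_u,\phi_w\ra=\la\phi_0,\phi_{w-u}\ra$ from its factors (they are rescaled by different $K_i$), so you should invoke the version of van der Corput without the invariance assumption, as allowed by the remark following Lemma \ref{lem:vdc}; and the argument is not an induction on $k$ in the proposition itself — the $Q(\ell)$-properties for $\ell\leq k$ are hypotheses, so the proof is direct for each $k$.
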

 \begin{proof}
As in the proof of Proposition \ref{lem:3mixcase}, we will assume that $n > m$ and $|n-m| \min_i (K_{i+1}-K_i) \geq 1$. 
We will use van der Corput inequality (see Lemma \ref{lem:vdc}). Let $\phi_u(\cdot)=\prod_{i=1}^kf_i\circ h_{K_i(u+m)}(\cdot)\in L^2(M,\mu)$.

Using the notation of Remark \ref{rem:notation}, we can write
\begin{equation}\label{eq:boun}
\begin{split}
&|\la \phi_u,\phi_{u+l}\ra|  = \left|\int_{M}\left(\prod_{i=1}^kf_i\circ h_{K_i(u+m)}(x)\right)\left(\prod_{i=1}^kf_i\circ h_{K_i(u+m+l)}(x)\right) \diff \mu\right|\\
& \quad = \left|\int_M\prod_{i=1}^k(f_i\cdot f_i\circ h_{K_il})(h_{K_i(u+m)}x) \diff \mu\right|\leq \prod_{i=1}^k\left|\int_Mf_i\cdot (f_i\circ h_{K_il}) \diff \mu\right|+ \\
& \quad + \sum_{J \subsetneq \{ 1,\dots,k \}} \left( \prod_{i \in J}\left|\int_Mf_i\cdot (f_i\circ h_{K_il}) \diff \mu\right| \right) \left( \left|\int_M\prod_{j\in J^c} (f_j\cdot f_j\circ h_{K_jl})^{\perp} (h_{K_j(u+m)}x) \diff \mu\right| \right),
\end{split}
\end{equation}
where in the last term we allow $J=\emptyset$ in which case $\prod_{i \in J}\left|\int_Mf_i\cdot (f_i\circ h_{K_il}) \diff \mu\right|:=1$. Notice moreover that if $|J^c|=1$, then the last integral vanishes by measure invariance. Hence we will assume that the sum is taken over $J\subsetneq \{1,\dots, k\}$ with $|J|\leq k-2$.

We will now bound each term on RHS of \eqref{eq:boun}: first trivially, we have
$$
\left|\int_Mf_i\cdot (f_i\circ h_{K_il}) \diff \mu\right|\leq \|f_i\|^2_{6},
$$
therefore the last term in \eqref{eq:boun} is bounded  by
$$
C\left(\prod_{i=1}^k\|f_i\|_6\right)^2 \sum_{\substack{J \subsetneq \{ 1,\dots,k \}\\ |J|\leq k-2}} \left|\int_M\prod_{j\in J^c} (f_j\cdot f_j\circ h_{K_jl})^{\perp} (h_{K_j(u+m)}x) \diff \mu\right| 
$$

Let $J$ be as above and 
let $a^J_1<a^J_2<\ldots<a^J_{m_J}$ denote all the elements in $J^c$ (recall that $m_J\geq 2$). Using the $Q(\ell)$-property for $\ell:=|J^c|\leq k$, we can bound the last term in the RHS of \eqref{eq:boun} by
\begin{multline}
\left|\int_M\prod_{j\in J^c} (f_j\cdot f_j\circ h_{K_jl})^{\perp} (h_{K_j(u+m)}x) \diff \mu\right| \leq C\left(\max \Big(1,\prod_{i\in J^c}\| (f_i\cdot f_i\circ h_{K_il})^{\perp}\|_6 \Big)\right)^{\gamma_{|J^c|}}\\
 \times \left(\frac{1}{(u+m)\min_{1\leq i\leq m_j}(K_{a^J_{i+1}}-K_{a^J_i})}\right)^{\beta_{|J^c|}}\leq\\
C\left(\max \Big(1,\prod_{i\in J^c}\| (f_i\cdot f_i\circ h_{K_il})^{\perp}\|_6 \Big)\right)^{\gamma_{|J^c|}}\times \left(\frac{1}{(u+m)\min_{1\leq i\leq k}(K_{i+1}-K_{i})}\right)^{\beta_{|J^c|}}, 
\end{multline}
where in the last inequality we use the fact that the minimum on the LHS is taken over a smaller set than on the RHS ($(a_i^J)$ is a subset of $\{1,\ldots k\}$).

 We now have the following important estimate (see Remark \ref{rem:sobnom}): for every $1\leq i\leq k$,
 $$
 \|(f_i\cdot f_i\circ h_{K_il})^\perp\|_{6}\leq 2\|f_i\cdot f_i\circ h_{K_il}\|_{6}\leq D\|f_i\|^2_{6}(K_il)^{6}.
 $$
Let us define $C_f:=\prod_{i=1}^k\|f_i\|_6^2$.
The inequality above implies that
 $$
 \prod_{i\in J^c}\|(f_i\cdot f_i\circ h_{K_il})^\perp \|_6\leq D^{|J^c|}C_f \left(\prod_{i\in J^c}K_i\right)^6 l^{6|J^c|}.
 $$

The above bounds and \eqref{eq:boun} imply that
\begin{multline*}
\frac{1}{L}\int_{0}^L|\la \phi_u,\phi_{u+l}\ra|\diff l \leq \frac{1}{L}\int_0^L\prod_{i=1}^k\left|\int_Mf_i\cdot (f_i\circ h_{K_il}) \diff \mu\right| \diff l+\\
+C \sum_{\substack{J \subsetneq \{ 1,\dots,k \}\\ |J|\leq k-2}}  \max \left( 1,   (D^{|J^c|}C_f)^{\gamma_{|J^c|}}  \Big( \prod_{i=1}^kK_i^{6\gamma_{|J^c|}} \Big) L^{6|J^c|\gamma_{|J^c|}}\right) \left(\frac{1}{(u+m)\min_i(K_{i+1}-K_i)}\right)^{\beta_{|J^c|}}.
\end{multline*}


Let 
$$
a_L:=\frac{1}{L}\int_0^L\prod_{i=1}^k\left|\int_Mf_i\cdot (f_i\circ h_{K_il}) \diff \mu\right| \diff l
$$
and let 
$$
b_{L,J^c}(u):= 2 C \max \left( 1,   (D^{|J^c|}C_f)^{\gamma_{|J^c|}}  \Big( \prod_{i=1}^kK_i^{6\gamma_{|J^c|}} \Big) L^{6|J^c|\gamma_{|J^c|}}\right) \left(\frac{1}{(u+m)\min_i(K_{i+1}-K_i)}\right)^{\beta_{|J^c|}}.
$$
Let $N = n-m$. By Lemma \ref{lem:vdc}, we have
\begin{equation}\label{eq:vdc_step1}
\left\|\frac{1}{n-m}\int_m^n  \prod_{i=1}^kf_i\circ h_{K_i u} \diff u\right\|_{2}\leq \left[2 a_L+\sum_{\substack{J \subsetneq \{ 1,\dots,k \}\\ |J|\leq k-2}} \frac{1}{N}\int_0^N b_{L,J^c}(u) \diff u \right]^{1/2} + O\left(\frac{L}{N}\right).
\end{equation}

Notice that by the $Q(2)$ property (used only for the last term in the product), we have (using also that $K_k=1$)
$$
a_L\leq \prod_{i=1}^{k-1}\|f_i\|_6\frac{1}{L}\int_0^L\left|\int_{M}f_k\cdot f_k\circ h_l d\mu \right|\diff l\leq \prod_{i=1}^{k}\|f_i\|^2_6\frac{1}{L}\int_0^Ll^{-\beta}\diff l\leq C_fL^{-\beta},
$$
where, we recall, $C_f=\prod_{i=1}^k\|f_i\|_6^2$.

We now define $L\in [0,N]$. By assumption, let $0<\varepsilon <(k+1)/k$ be such that $K_1 > N^{-\varepsilon}$. Let us define $\theta=\theta_{k,\varepsilon}$ 
and $L$ by
$$
\theta  = \frac{1}{2} \min \left(k-\epsilon(k-1),\min_{i\leq k}\frac{\beta_i}{12\gamma_i} \right)
$$
and
$$
L:=\left(\min_{\substack{J\subset \{1,\ldots, k\}\\|J|\leq k-2}}[\prod_{i\in J^c} K_i]^{-1}\Big(N\min_i (K_{i+1}-K_i)\Big)^{\theta}\right)^{\frac{1}{|J^c|}},
$$
(where we allow $J=\emptyset$).
This implies that for every $J\subset \{1,\ldots, k\}$ with $|J|\leq k-2$,
\begin{equation}\label{prop:ell}
L^{|J^c|}\prod_{i\in J^c} K_i\leq \left(N\min_i(K_{i+1}-K_i)\right)^{\theta}.
\end{equation}
Moreover, if a set $J_0\subset \{1,\dots k\}$ realizes the minimum in the definition of $L$, then
\begin{equation}\label{eq:kL}
L^k\geq L^{|J_0^c|}\geq L^{|J_0^c|}\prod_{i\in J_0^c} K_i=\left(N\min_i(K_{i+1}-K_i)\right)^\theta.
\end{equation}

From our assumption $N^{-\varepsilon} \leq K_1 \leq \cdots \leq K_k = 1$, we deduce that 
$$
\left(\prod_{i=1}^kK_i\right)^{\frac{1}{k}} \geq N^{-\varepsilon \frac{k-1}{k}};
$$
in particular, by \eqref{prop:ell} for $J=\emptyset$ and using the definition of $\theta$,
 \begin{equation}\label{eq:vdc_bound1}
\begin{split}
0 \leq \frac{L}{N} &\leq \frac{(N\min_i (K_{i+1}-K_i))^{\theta/k}}{\big(\prod_{i=1}^kK_i \big)^{1/k} N}\leq \frac{1}{N^{1-\theta/k-\epsilon (k-1)/k}} \\
& \leq \frac{1}{\big( N \min_i (K_{i+1}-K_i) \big)^{1-\theta/k-\epsilon(k-1)/k}}\leq 1.
\end{split}
\end{equation}

Notice that by the bound on $a_L$ and \eqref{eq:kL} it follows that 
 \begin{equation}\label{eq:vdc_bound2}
a_{L}  \leq C_f\frac{1}{(N\min_i (K_{i+1}-K_i))^{\frac{\theta\beta}{k}}}.
\end{equation}
Fix $J\subsetneq \{1,\dots, k\}$, $|J|\leq k-2$. For the  term $b_{L,J^c}(u)$ in \eqref{eq:vdc_step1}, since 
$$
\int_0^N \frac{\diff u}{(u+m)^{\beta_{|J^c|}}} = \frac{n^{1-\beta_{|J^c|}} - m^{1-\beta_{|J^c|}}}{1-\beta_{|J^c|}} \leq \frac{(n-m)^{1-\beta_{|J^c|}}}{1-\beta_{|J^c|}}= \frac{N^{1-\beta_{|J^c|}}}{1-\beta_{|J^c|}},
$$
it follows, using also \eqref{prop:ell}, that 
 \begin{multline}\label{eq:vdc_bound3}
\frac{2}{N}\int_0^N b_{L,J^c}(u) \diff u \leq \\
\frac{2C}{N} \left( 1+ (D^{|J^c|} C_f)^{\gamma_{|J^c|}}  \Big( \prod_{i\in J^c}K_i^{6\gamma_{|J^c|}} \Big) L^{6|J^c|\gamma_{|J^c|}}\right) \frac{1}{\min_{i}(K_{i+1}-K_i)^{\beta_{|J^c|}}} \int_0^N \frac{\diff u}{(u+m)^{\beta_{|J^c|}}} \\
 \leq C' \frac{1}{\min_i (K_{i+1}-K_i)^{\beta_{|J^c|}}N^{\beta_{|J^c|}}} + C' C_f^{\gamma_{|J^c|}} \frac{1}{(N\min_i (K_{i+1}-K_i))^{\beta_{|J^c|}-6\gamma_{|J^c|} \theta}} \\
\leq C' \max(1,C_f)^{\gamma_{|J^c|}}  \frac{1}{(N\min_i (K_{i+1}-K_i))^{\beta_{|J^c|}-6\gamma_{|J^c|} \theta}}.
\end{multline}

Set 
$$
\eta_k:=\frac{1}{2}\max(1,\max_{i\leq k}\gamma_i)
$$
and
$$
\delta:=\min\Big(1-\frac{\theta}{k}-\epsilon \frac{k-1}{k}, \frac{\theta\beta}{2k} , \min_{i\leq k}\frac{ \beta_i-6\gamma_i \theta}{2} \Big),
$$
notice that $\delta>0$ by the definition of $\theta$.
We get from \eqref{eq:vdc_step1}, using \eqref{eq:vdc_bound1}, \eqref{eq:vdc_bound2}, and \eqref{eq:vdc_bound3} (for each $J\subsetneq \{1,\dots, k\}$ and  summing over $J$), we obtain
$$
\left\|\frac{1}{n-m}\int_m^n  \prod_{i=1}^kf_i\circ h_{K_i u} \diff u\right\|_{2} \leq C'' \max(1,C_f)^{\eta_k} \frac{1}{N^{\delta}\min_i (K_{i+1}-K_i)^{\delta}}. 
$$
\end{proof}

\subsection{Combinatorial argument}

We describe an inductive procedure that will be used in the proof of Theorem \ref{main:th2} (see the outline of the proof below).

Fix $k\in \N$ and fix numbers $(\zeta_i)_{i\leq k}$ in $(0,1)$. For any $(k+1)$-tuple of numbers $0=t_0< t_1<...<t_k$ we perform the following inductive procedure. 
\begin{itemize}[align=left]
\item[\emph{Step 1.}] Let $r_1=t_k^{\frac{\zeta_1}{12k}}$. If $\{t_i\}_{i=0}^k\subset [0,r_1]\cup[t_k-r_1,t_k]$, the procedure stops. If not, let $s_1<k$ be the largest such that $t_{s_1}\notin [0,r_1]\cup[t_k-r_1,t_k]$.
\item[\emph{Step 2.}]  Let $r_2:=r_1^{\frac{\zeta_2}{12k}}$. If $\{t_i\}_{i=0}^k\subset [0,r_2]\cup[t_{s_1}-r_2,t_{s_1}+r_2] \cup[t_k-r_2,t_k]$ the procedure stops. If not, let $s_2<k$ be the largest such that $t_{s_2}\notin [0,r_2] \cup [t_{s_1}-r_2,t_{s_1}+r_2] \cup[t_k-r_2,t_k]$.
\item[\emph{Step $\ell+1$.}] If the procedure does not stop at {\em Step $\ell$}, let $r_{\ell+1}:=r_\ell^{\frac{\zeta_{\ell+1}}{12k}}$ and take ${s_\ell}<k$ to be the largest such that $t_{s_\ell}\notin  [0,r_{\ell}] \cup \bigcup_{m=1}^{\ell-1}[t_{s_m}-r_\ell,t_{s_m}+r_\ell] \cup [t_k-r_\ell,t_k]$.
\end{itemize} 
Notice that the procedure will definitely stop no later than {\em Step $k$}. Moreover, notice that if the procedure stops exactly at {\em Step $k$}, then by the definition of $(r_\ell)$, for $\xi_k:=\frac{\prod_{i=1}^k \zeta_i}{(12k)^k}$, we have
\be\label{eq:stopk}
\min_{0\leq i <k}|t_{i+1}-t_i|\geq t_k^{\xi_k}.
\ee
It is crucial that $\xi_k$ depends on $(\zeta_i)$ and $k$ but \emph{not} on the $(t_i)_{i\leq k}$.

\subsection{Proof of Theorem \ref{main:th2}}\label{sec:proof_of_Theorem_2}

The rest of the section is devoted to the proof of Theorem \ref{main:th2}. We first present an outline for the reader's convenience.

\paragraph{Outline of the proof.}
The proof consists of two cases (\textbf{Case A} and \textbf{Case B} below), depending whether the inductive procedure described above stops at \emph{Step $k$} or before. 

If it stops at \emph{Step $\ell$} for $\ell < k$ (\textbf{Case A}), it means that there exist $j \neq i$ such that the corresponding times  $t_j$ and $t_i$ are close, namely $|t_i-t_j| \leq 2 r_{\ell}$. We then write 
$$
f_i \circ h^{\tau}_{t_i} \cdot f_j \circ h^{\tau}_{t_j} = (f_i \cdot f_j \circ h^{\tau}_{t_j-t_i}) \circ h^{\tau}_{t_i},
$$
and, by assumption, the Sobolev norm of the term in brackets is small, namely is of order $O(r_\ell^{6})$. 
We do the same for all the times $t_j$ contained in an interval of the form $[t_{s_i}-r_{\ell}, t_{s_i}+r_{\ell}]$ as described in the inductive procedure, and we consider the corresponding terms in brackets as a single observable. In this way, we reduce the number of observables to $\ell<k$ (with appropriate bounds on their Sobolev norms), and we can apply the inductive hypothesis on quantitative $\ell$-mixing to conclude.

If the procedure does stop exactly at \emph{Step $k$} (\textbf{Case B}), 
we proceed as in the proof of Theorem \ref{thm:main3}, exploiting the shearing properties of geodesic segments of length $\sigma$.
We remark that our assumption on the time-change ensures that the deviations of the shearing property form the unperturbed homogeneous case is logarithmic (see Lemma \ref{lem:erren}), hence the error term is of order $\sigma \log^k |t_k|$. 
In this case, for the assumptions of Proposition \ref{lem:l2} to be satisfied, we will need to choose $\sigma = |t_k|^{-\alpha^2}$ for some small $\alpha>0$. In order to conclude, it will be crucial to exploit \eqref{eq:stopk}, which will ensures that $\sigma = |t_k|^{-\alpha^2} = O(\min_{0\leq i <k}|t_{i+1}-t_i|^{-\widetilde{\alpha}})$, for some $\widetilde{\alpha}>0$.

\begin{proof}[Proof of Theorem \ref{main:th2}] 
Let $k \geq 2$ and assume that $(h^{\tau}_t)$ has the property $Q(\ell)$ for $2\leq \ell \leq k$. Let $\zeta_{i+1}:=\frac{\beta_{i+1}}{\gamma_{i+1}}$.
Fix $f_0,\ldots, f_{k}\in W^6(M) \cap L^2_0(M)$ and let $0=t_0\leq t_1\leq\ldots \leq t_{k}$, or $t_k\leq \ldots \leq t_1 \leq t_{0}=0$ .
By invariance of the measure $\mu^{\tau}$ for $(h^{\tau}_t)$, up to composing with $h^{\tau}_{-t_k}$ and relabeling $t_{k-i}' = t_k-t_i$, we can assume that
\begin{equation}\label{eq:thm2_max}
\min_{0\leq i <k} |t_{i+1}-t_i| = \min_{1\leq i < k} |t_{i+1}-t_i|.
\end{equation}

We now apply the combinatorial procedure to the sequence $(\zeta_i)_{i\leq k-1}$ and $t_0<...<t_k$. Assume the procedure stops at {\em Step $\ell$}.  We consider two cases:

\textbf{Case A.} $\ell<k$. By definition this means that $\{t_i\}_{i=0}^k\subset [0,r_{\ell}]\cup [t_k-r_\ell,t_k]\bigcup_{m=1}^{\ell-1}[t_{s_m}-r_\ell,t_{s_m}+r_\ell]$. For $0\leq i\leq \ell$, let 
$$
\tilde{f}_i:=\prod_{t_j\in [t_{s_i}-r_\ell,t_{s_i}+r_\ell]} f_j\circ h_{t_j-t_{s_i}},
$$
with $s_0=0$ and $s_\ell=k$. Let $w_i:=\#\{j\leq \ell\;:\; t_j\in [t_{s_i}-r_\ell,t_{s_i}+r_\ell]\}$. Then 
\begin{equation}\label{boun:int}
\int_M \prod_{i=0}^kf_i \circ h^{\tau}_{t_i} \diff \mu^{\tau}=\int_M 
\prod_{i=0}^{\ell}\tilde{f}_i\circ h^\tau_{t_{s_i}}\diff \mu^{\tau}.
\ee
Notice that by the same splitting as in \eqref{asd2}, we have
\begin{equation}\label{eq:neqw}
\left| \int_M \prod_{i=0}^{\ell}\tilde{f}_i\circ h^\tau_{t_{s_i}} \diff \mu^{\tau} \right| \leq C\left(\prod_{i=0}^k \|f_i\|_\infty\right)\sum_{i=0}^\ell \left|\int_M\tilde{f}_id\mu^\tau\right|
+ \left| \int_M \prod_{i=0}^{\ell}\tilde{f}_i^{\perp}\circ h^\tau_{t_{s_i}}d\mu^\tau \right|.
\end{equation}
Moreover, by the $Q(w_i)$ property and the definition of $\tilde{f}_i$,
\begin{multline*}
\left|\int_M\tilde{f}_i \diff \mu^\tau\right|<C\left(\prod_{i=0}^k \|f_i\|_6\right)^{w_i} \left(\frac{1}{\min_{t_j\in [t_{s_i}-r_\ell,t_{s_i}+r_\ell]} (t_{j+1}-t_j)}\right)^{\beta_{w_i}}\leq \\
C\left(\prod_{i=0}^k \|f_i\|_6\right)^{w_i}\left(\frac{1}{\min_{i\leq k}(t_{i+1}-t_i)}\right)^{\beta_{w_i}}.
\end{multline*}
Therefore, these terms have the desired behaviour. We will now deal with the last term in \eqref{eq:neqw}.
Since $\ell<k$, we can use the $Q(\ell+1)$ property to bound 
$$
\left| \int_M \prod_{i=0}^{\ell}\tilde{f}_i^{\perp}\circ h^\tau_{t_{s_i}} \diff \mu^\tau \right| \leq C\left(\prod_{i=0}^\ell \|\tilde{f}_i\|_6\right)^{\gamma_{\ell+1}}\times \\
\left(\frac{1}{\min_{0\leq i\leq \ell} t_{s_{i+1}}-t_{s_i}}\right)^{\beta_{\ell+1}}.
$$ Notice that by the definition of $(\tilde{f}_i)$,
$$
\prod_{i=0}^\ell \|\tilde{f}_i\|_6\leq (\prod_{i=0}^k \|f_i\|_6)(r_\ell)^{6w_i}\leq (\prod_{i=0}^k \|f_i\|_6)(r_\ell)^{6k}. 
$$
On the other hand, by the definition of $(s_i)$, we have 
$\min_{0\leq i\leq \ell} t_{s_{i+1}}-t_{s_i}\geq r_{\ell-1}$. Therefore, we can bound \eqref{boun:int} by
$$
\left| \int_M \prod_{i=0}^{\ell}\tilde{f}_i\circ h^\tau_{t_{s_i}}\diff \mu^{\tau} \right|  \leq C(\prod_{i=0}^k \|f_i\|_6)^{6\gamma_{\ell+1}}r_{\ell}^{6k\gamma_{\ell+1}} r_{\ell-1}^{-\beta_{\ell+1}}.
$$
By the definition of $(r_i)$ and $(\zeta_i)$, we have 
$$
r_{\ell}^{6k\gamma_{\ell+1}} r_{\ell-1}^{-\beta_{\ell+1}}\leq r_{\ell-1}^{\frac{\zeta_{\ell+1}}{12k}\cdot6k\gamma_{\ell+1}-\beta_{\ell+1}}\leq r_{\ell-1}^{\frac{-\beta_{\ell+1}}{2}}.
$$
It remains to notice that $r_{\ell-1}=t_k^{\theta_\ell}$ (where $\theta_\ell$ does not depend on $(t_i)$ but only on $(\zeta_i)$). To deduce that for $\beta_{k+1}:= \theta_\ell \frac{\beta_{\ell+1}}{2}$, we can bound \eqref{boun:int} by 
$$
C(\prod_{i=0}^k \|f_i\|_6 )^{6\gamma_{\ell+1}}\frac{1}{t_k^{\beta_{k+1}}}\leq C(\prod_{i=0}^k \|f_i\|_6 )^{6\gamma_{\ell+1}}\frac{1}{\min_{0\leq i\leq k}(t_{i+1}-t_i)^{\beta_{k+1}}}.
$$
This finishes the proof of the $Q(k+1)$ property in this case.

\textbf{Case B.} $\ell=k$. Recall that in this case \eqref{eq:stopk} holds.
Fix
$$
0<\sigma:=\frac{1}{|t_k|^{\alpha^2}}<1, \text{\ \ \ where\ \ \ }  \alpha=\alpha_k :=\min \left(\frac{1}{3k}, \frac{\xi_k}{2}\right).
$$
We will assume that $t_1 \geq 1$, otherwise the result is immediate. We have
$$
\int_M f_0 \cdot \prod_{i=1}^kf_i \circ h^{\tau}_{t_i} \diff \mu^{\tau} = \int_M (\tau f_0) \cdot \prod_{i=1}^kf_i \circ h^{\tau}_{t_i} \diff \mu = \Big\langle \tau f_0, \prod_{i=1}^kf_i \circ h^{\tau}_{t_i}\Big\rangle_{L^2(M,\mu)}.
$$
By invariance of the Haar measure $\mu$ by the geodesic flow and by integration by parts, for $\sigma>0$, we can write
\begin{equation*}
\begin{split}
&\Big\langle \tau f_0, \prod_{i=1}^kf_i \circ h^{\tau}_{t_i}\Big\rangle = \frac{1}{\sigma} \int_0^{\sigma} \Big\langle (\tau f_0) \circ g_s, \prod_{i=1}^kf_i \circ h^{\tau}_{t_i} \circ g_s \Big\rangle \diff s \\
& = \frac{1}{\sigma}  \Big\langle (\tau f_0) \circ g_{\sigma},  \int_0^{\sigma} \prod_{i=1}^kf_i \circ h^{\tau}_{t_i} \circ g_s \diff s \Big\rangle - \frac{1}{\sigma} \int_0^{\sigma} \Big\langle X(\tau f_0) \circ g_s, \int_0^s \prod_{i=1}^kf_i \circ h^{\tau}_{t_i} \circ g_r \diff r \Big\rangle \diff s.
\end{split}
\end{equation*}
Cauchy-Schwarz inequality then yields
\begin{equation}\label{eq:thm2_CS}
\begin{split}
\left\lvert \int_M f_0 \cdot \prod_{i=1}^kf_i \circ h^{\tau}_{t_i} d\mu^{\tau} \right\rvert \leq & \frac{\norm{\tau f_0}_2}{\sigma} \norm{\int_0^{\sigma} \prod_{i=1}^kf_i \circ h^{\tau}_{t_i} \circ g_s \diff s}_2 \\
& + \frac{1}{\sigma} \int_0^{\sigma} \norm{X(\tau f_0)}_2 \norm{\int_0^s \prod_{i=1}^kf_i \circ h^{\tau}_{t_i} \circ g_r \diff r}_2 \diff s \\
\leq & \left( \frac{\norm{\tau f_0}_2}{\sigma} + \norm{X(\tau f_0)}_2 \right) \sup_{s \in [0,\sigma]} \norm{\int_0^s \prod_{i=1}^kf_i \circ h^{\tau}_{t_i} \circ g_r \diff r}_2\\
\leq &\frac{3}{\sigma} \norm{\tau}_6 \norm{f_0}_6\sup_{s \in [0,\sigma]} \norm{\int_0^s \prod_{i=1}^kf_i \circ h^{\tau}_{t_i} \circ g_r \diff r}_2.
\end{split}
\end{equation}
By the commutation relation in Lemma \ref{lem:erren}, we have
$$
\norm{\int_0^s \prod_{i=1}^kf_i \circ h^{\tau}_{t_i} \circ g_r \diff r}_2 = \norm{\int_0^s \prod_{i=1}^k f_i \circ g_r \circ h^{\tau}_{e^{r}t_i + A(x,r,t_i)} \diff r}_2.
$$
Since, for every $r \in [0,s]$ and $x \in M$, there  exists a constant $C'$ such that 
$$
\left\lvert \prod_{i=1}^k f_i \circ g_r ( h^{\tau}_{e^{r}t_i + A(x,r,t_i)} x)- \prod_{i=1}^k f_i (h^{\tau}_{e^{r}t_i + A(x,r,t_i)}x) \right\rvert \leq C' s \prod_{i=1}^k \norm{f_i}_{6},
$$
it follows
\begin{equation}\label{eq:thm2_step1}
 \norm{\int_0^s \prod_{i=1}^k f_i \circ g_r \circ h^{\tau}_{e^{r}t_i + A(x,r,t_i)} \diff r - \int_0^s \prod_{i=1}^k f_i \circ h^{\tau}_{e^{r}t_i + A(x,r,t_i)} }_2 \leq C's^{2}\prod_{i=1}^k \norm{f_i}_{6}.
\end{equation}
Moreover, by Lemma \ref{lem:erren}, we have
\begin{equation}\label{eq:thm2_step2}
\begin{split}
& \norm{\int_0^s \prod_{i=1}^k f_i \circ h^{\tau}_{e^{r}t_i + A(x,r,t_i)} - \int_0^s \prod_{i=1}^k f_i \circ h^{\tau}_{e^{r}t_i} \diff r}_2 \\
& \leq \int_0^s \norm{ \prod_{i=1}^k f_i \circ h^{\tau}_{e^{r}t_i + A(x,r,t_i)} - \prod_{i=1}^k f_i \circ h^{\tau}_{e^{r}t_i} }_2 \diff r  \leq C \left(\prod_{i=1}^k \norm{f_i}_6 \right) s \max_{r\in[0,s]} r \|A(x,r,t_k)\|_\infty^k \\
& \leq C\left(\prod_{i=1}^k \norm{f_i}_6 \right)  s^{2} \log^k |t_k|,
\end{split}
\end{equation}
for some constant $C >0$.
From \eqref{eq:thm2_CS}, using \eqref{eq:thm2_step1} and \eqref{eq:thm2_step2}, we obtain
\begin{equation}\label{eq:thm2_bound2}
\begin{split}
& \left\lvert \int_X f_0 \cdot \prod_{i=1}^kf_i \circ h^{\tau}_{t_i} d\mu^{\tau} \right\rvert  \leq  \frac{3 \norm{\tau}_6\norm{f_0}_{6}}{\sigma} \sup_{s \in [0,\sigma]} \norm{\int_0^s \prod_{i=1}^k f_i \circ g_r \circ h^{\tau}_{e^rt_i + A(x,r,t_i)} \diff r}_2 \\
& \quad \leq C_{\tau,f}  \sigma \log^k |t_k| +  \frac{3 \norm{\tau}_6 \norm{f_0}_{6}}{\sigma}  \sup_{s \in [0,\sigma]} \norm{\int_0^s \prod_{i=1}^k f_i \circ h^{\tau}_{e^rt_i} \diff r}_2.
\end{split}
\end{equation}
where we have defined $C_{\tau, f} = 6 \norm{\tau}_6 \prod_{i=0}^k \norm{f_i}_{6}$.
We now bound the two terms in the right hand-side of \eqref{eq:thm2_bound2} separately.
For the first term, by the choice of $\sigma$ and by  \eqref{eq:stopk}, we have that for every $\varepsilon>0$
\begin{equation}\label{eq:thm2_term1}
C_{\tau, f} \sigma \log^k|t_k| = C \frac{ (\log|t_k|)^k}{|t_k|^{\alpha^2}} \leq C \frac{1}{|t_k|^{\alpha^2 - \varepsilon}} \leq C \frac{1}{(\min_{0\leq i <k} |t_{i+1}-t_i|)^{(\alpha^2 - \varepsilon)/\xi_k}}.
\end{equation}
We now bound the second term in \eqref{eq:thm2_bound2}.
Define $0 < K_i = t_i/t_k \leq 1$. For all $x\in M$, changing variable $u=e^rt_k$, and integrating by parts, 
\begin{equation*}
\begin{split}
\left\lvert \int_0^s \prod_{i=1}^k f_i \circ h^{\tau}_{e^rt_i} (x) \diff r\right\rvert &= \left\lvert \int_{t_k}^{e^st_k} \prod_{i=1}^k f_i \circ h^{\tau}_{K_i u} (x) \frac{\diff u}{u}\right\rvert \\
&\leq \frac{1}{|t_k|}  \left\lvert \int_{t_k}^{e^st_k} \prod_{i=1}^k f_i \circ h^{\tau}_{K_i r} (x) \diff r\right\rvert +  \left\lvert \int_{t_k}^{e^st_k} \frac{1}{r^2} \int_{t_k}^{r} \prod_{i=1}^k f_i \circ h^{\tau}_{K_i u} (x) \diff u \diff r \right\rvert.
\end{split}
\end{equation*}
Therefore, 
\begin{equation*}
\begin{split}
\norm{\int_0^s \prod_{i=1}^k f_i \circ h^{\tau}_{e^rt_i} \diff r}_2 & \leq \frac{1}{|t_k|} \norm{\int_{t_k}^{e^st_k} \prod_{i=1}^k f_i \circ h^{\tau}_{K_i r} \diff r}_2 +\frac{e^s-1}{|t_k|} \sup_{r\in[0,s]}  \norm{\int_{t_k}^{e^rt_k} \prod_{i=1}^k f_i \circ h^{\tau}_{K_i u} \diff u}_2 \\
& \leq  \sup_{r\in[0,s]} \frac{2}{|t_k|}  \norm{\int_{t_k}^{e^rt_k} \prod_{i=1}^k f_i \circ h^{\tau}_{K_i u} \diff u}_2,
\end{split}
\end{equation*}
hence
\begin{equation}\label{eq:thm2_bound3}
\frac{3 \norm{\tau}_6 \norm{f_0}_{6}}{\sigma}  \sup_{s \in [0,\sigma]} \norm{\int_0^s \prod_{i=1}^k f_i \circ h^{\tau}_{e^rt_i} \diff r}_2 \leq \frac{6 \norm{\tau}_6 \norm{f_0}_{6}}{\sigma |t_k|}  \sup_{s \in [0,\sigma]}  \norm{\int_{t_k}^{e^st_k} \prod_{i=1}^k f_i \circ h^{\tau}_{K_i u} \diff u}_2.
\end{equation}
Let $0 \leq s \leq \sigma$. 
If $0 \leq s \leq |t_k|^{-2\alpha^2}$,  
then, obviously,
\begin{equation}\label{eq:thm2_bound3a}
\frac{6 \norm{\tau}_6 \norm{f_0}_{6}}{\sigma |t_k|}  \norm{\int_{t_k}^{e^st_k} \prod_{i=1}^k f_i \circ h^{\tau}_{K_i u} \diff u}_2 \leq C \frac{e^s-1}{\sigma} \leq 2C\frac{s}{\sigma} \leq 2C |t_k|^{ - \alpha^2}.
\end{equation}
Recall we are assuming that $t_1 \geq 1$. If $|t_k|^{-2\alpha^2}< s \leq \sigma$, we have
$$
\frac{1}{(e^s-1)|t_k|} \leq \frac{1}{s|t_k|}\leq \frac{1}{|t_k|^{1-2\alpha}} \leq \left( \frac{t_1}{t_k}\right)^{1-2\alpha} = K_1^{1-2\alpha^2},
$$
hence the assumption of Proposition \ref{lem:l2} is satisfied with $\varepsilon = \frac{1}{1-2\alpha^2} < \frac{k+1}{k}$, $m=t_k$, and  $n=e^{s}t_k$.
Thus, by Proposition \ref{lem:l2}, we get
\begin{equation*}
\begin{split}
& \frac{6 \norm{\tau}_6 \norm{f_0}_{6}}{\sigma |t_k|}  \norm{\int_{t_k}^{e^st_k} \prod_{i=1}^k f_i \circ h^{\tau}_{K_i u} \diff u}_2 \\
& \qquad = \frac{6 \norm{\tau}_6 \norm{f_0}_{6}(e^{s}-1)}{\sigma} \norm{ \frac{1}{(e^{s}-1)t_k}\int_{t_k}^{e^{s}t_k} \prod_{i=1}^k f_i \circ h^{\tau}_{K_iu}  \diff u}_2\\
& \qquad \leq C_{\tau,f} \frac{e^s-1}{\sigma} \frac{1}{(\min_{1\leq i < k} (K_{i+1}-K_i) (e^s-1)|t_k|)^{\delta}} \\
& \qquad \leq 2C_{\tau, f} \frac{s^{1-\delta}}{\sigma}  \frac{1}{(\min_{1\leq i < k} |t_{i+1}-t_i|)^{\delta}}\leq 2C_{\tau,f} \frac{1}{(\sigma \min_{1\leq i < k} |t_{i+1}-t_i|)^{\delta}}.
\end{split}
\end{equation*}
By \eqref{eq:thm2_max}, the maximum above can be taken within $0\leq i < k$. Moreover, by \eqref{eq:stopk} and the definition of $\sigma$,  
\begin{equation*}
\begin{split}
\frac{1}{\sigma \min_{1\leq i < k} |t_{i+1}-t_i|} &= \frac{1}{\sigma \min_{0\leq i < k} |t_{i+1}-t_i|} =\frac{|t_k|^{\alpha^2}}{ \min_{0\leq i < k} |t_{i+1}-t_i|} \\
& \leq \frac{1}{\min_{0\leq i < k} |t_{i+1}-t_i|^{1-\alpha^2/\xi_k}},
\end{split}
\end{equation*}
and, by definition, $1-\alpha^2/\xi_k>0$.
Thus we obtain 
\begin{equation}\label{eq:thm2_bound3b}
\frac{6 \norm{\tau}_6 \norm{f_0}_{6}}{\sigma |t_k|} \norm{\int_{t_k}^{e^st_k} \prod_{i=1}^k f_i \circ h^{\tau}_{K_i u} \diff u}_2  \leq 2C_{\tau,f} \frac{1}{(\min_{0\leq i < k} |t_{i+1}-t_i|)^{\delta(1-\alpha^2/\xi_k)}}.
\end{equation}
In both cases $0 \leq s \leq |t_k|^{-2\alpha}$ or $|t_k|^{-2\alpha} < s \leq \sigma$, by \eqref{eq:thm2_bound3a} and \eqref{eq:thm2_bound3b}, we deduce that 
$$
\sup_{s\in[0,\sigma]}\frac{6 \norm{\tau}_6 \norm{f_0}_{6}}{\sigma |t_k|}  \norm{\int_{t_k}^{e^rt_k} \prod_{i=1}^k f_i \circ h^{\tau}_{K_i u}  \diff u}_2 \leq C \max(1,C_{\tau,f}) \frac{1}{(\min_{0\leq i < k} |t_{i+1}-t_i|)^{\delta}},
$$
for some $\delta >0$, so that, by \eqref{eq:thm2_bound3},
\begin{equation}\label{eq:thm2_term2}
\begin{split}
\frac{3 \norm{\tau}_6 \norm{f_0}_{6}}{\sigma} \sup_{s \in [0,\sigma]} \norm{\int_0^s \prod_{i=1}^k f_i \circ h^{\tau}_{e^rt_i} \diff r}_2 \leq C \max(1,C_{\tau,f}) \frac{1}{(\min_{0\leq i < k} |t_{i+1}-t_i|)^{\delta}}.
\end{split}
\end{equation}
The claim then follows by \eqref{eq:thm2_term1} and \eqref{eq:thm2_term2}.
\end{proof}

\subsection*{Acknowledgements}
We would like to thank Giovanni Forni for several discussions. We are also grateful to the organizers of the conference \lq\lq Dynamics of Parabolic Flows\rq\rq\ held at the University of Z\"urich in July 2019 for the opportunity to further discuss the project.
We thank the referee for his/her careful reading and corrections.


\end{document}